\documentclass[a4paper, 11pt]{article}

\usepackage[T1]{fontenc}
\usepackage{amsmath}
\usepackage{amssymb}

\usepackage{amsfonts}
\usepackage{amsthm}
\usepackage{indentfirst}
\usepackage{psfrag}
\usepackage{amscd,stmaryrd,latexsym}
\usepackage[pdftex]{graphicx} 
\DeclareGraphicsRule{.pdftex}{pdf}{.pdftex}{}

\newtheorem{thm}{Theorem}[section]
\newtheorem{lem}{Lemma}[section]

\newtheorem{Prop}{Proposition}[section]

\theoremstyle{remark}
\newtheorem{es}{Example}[section]

\theoremstyle{definition}

\theoremstyle{remark}
\newtheorem{oss}{Remark}[section]

\newcommand{\be}{\begin{equation}}
\newcommand{\ee}{\end{equation}}
\newcommand{\R}{\mathbb{R}}

\newcommand{\N}{\mathbb{N}}

\newcommand{\C}{\mathbb{C}}
\newcommand{\CP}{\mathbb{C}\mathbb{P}}

\newcommand{\G}{\mathbf{g}}

\newcommand\res{\mathop{\hbox{\vrule height 7pt width .5pt depth 0pt
\vrule height .5pt width 6pt depth 0pt}}\nolimits}

\def\eps{\mathop{\varepsilon}}
\def\Mc{\mathop{\mathcal{M}_{}}}
\def\Gc{\mathop{\mathcal{G}}}

\def\Ac{\mathop{\mathcal{A}_{}}}
\def\Ace{\mathop{\mathcal{A}_{\varepsilon}}}

\def\Acr{\mathop{\mathcal{A}_{\rho}}}
\def\Vc{\mathop{\mathcal{V}_{}}}

\def\pt{\mathop{\left( \Phi^{-1} \right)}}
\def\Sc{\mathop{\mathcal{S}_{}}}
\def\Sce{\mathop{\mathcal{S}_{\varepsilon}}}
\def\Vce{\mathop{\mathcal{V}_{\varepsilon}}}
\def\Scr{\mathop{\mathcal{S}_{\rho}}}

\def\Hc{\mathop{\mathcal{H}}}
\def\La{\Lambda}

\def\lt{\tilde{\lambda}}
\def\ti{\tilde}

\def\la{\lambda}
\def\ka{\kappa}

\def\om{\omega}
\def\P{\Phi}

\def\p{\partial}

\def\vt{\vartheta}

\begin{document}

\title{\textbf{Tangent cones to positive-$(1,1)$ De Rham currents}}
\author{\textit{Costante Bellettini}} 
\date{}
\maketitle

\textbf{Abstract}: \textit{We consider positive-$(1,1)$ De Rham currents in arbitrary almost complex manifolds and prove the uniqueness of the tangent cone at any point where the density does not have a jump with respect to all of its values in a neighbourhood. Without this assumption, counterexamples to the uniqueness of tangent cones can be produced already in $\C^n$, hence our result is optimal. The key idea is an implementation, for currents in an almost complex setting, of the classical blow up of curves in algebraic or symplectic geometry. Unlike the classical approach in $\C^n$, we cannot rely on plurisubharmonic potentials.}

\section{Introduction}
\label{setting}

In many problems from analysis one is naturally led to study possibly non-smooth objects: $W^{1,2}$-harmonic maps between manifolds, volume-minimizing currents and weak solutions to equations are a few important examples. In order to understand the behaviour of the object around a singular point, the first study that is typically done is the \textit{blow-up analysis}. We look at the object inside smaller and smaller balls $B_{r_n}(x)$ centered at the chosen point $x$ and dilate to a reference size (e.g. the unit ball). For any sequence $\{r_n\}$ of radii the rescaled objects converge, up to a subsequence, to what is called \textit{a tangent} (tangent maps, tangent cones...). Of course we ask the question: will we get different tangents by choosing different sequences of radii for the blow-up analysis? If not, then we say that the object under investigation has a \textit{unique tangent} at the chosen point. This uniqueness is a very important regularity property, which has been widely investigated in several problems using different techniques. Without hoping to do justice to the vast literature, we present a short overview (see also the survey \cite{H}).

\medskip

Regarding tangent cones at a point $x$ of a mass-minimizing current it is known that the masses of the rescaled currents converge in a non-increasing fashion towards the so-called density at $x$: the speed of convergence is called \textit{rate of decay} of the mass ratio at $x$. An approach often used to prove uniqueness of the tangent cone at $x$ is to show that this rate of decay is fast enough (see \cite{F} 5.4.3). In \cite{W} B. White proved the uniqueness of tangent cone at all points of a $2$-dimensional mass-minimizing integral cycle by showing, via a comparison method, an epiperimetric inequality, from which the desired decay followed. In \cite{PR} D. Pumberger and T. Rivi{\`e}re proved, also by showing the ``fast decay property'', that at any point of a semi-calibrated integral $2$-cycle the tangent cone is unique.

In other works on (semi-)calibrated $2$-cycles alternative proofs have been given by using techniques of slicing with positive intersection: this is the case of integral pseudo-holomorphic $2$-cycles in dimension $4$ (C. H. Taubes in \cite{Ta}, T. Rivi{\`e}re and G. Tian in \cite{RT1}) and integral Special Legendrian $2$-cycles in dimension $5$ (the author and T. Rivi{\`e}re in \cite{BR}, \cite{B}). 

In \cite{RT2} the uniqueness for pseudo holomorphic integral $2$-dimensional cycles is achieved in arbitrary codimension by means of a lower-epiperimetric inequality. 

In \cite{Simon} L. Simon proved that if a tangent cone to a minimal integral current has multiplicity one and has an isolated singularity, then it is unique. This proof applies to tangents at isolated singular points for harmonic maps  taking values into an analytic manifold and is based on the Lojaciewicz inequality, again leading to a rate of decay (for the energy) which implies the uniqueness. On the other hand, White showed in \cite{W2} that tangent maps at isolated singularities of harmonic maps might fail to be unique if the assumption of analiticity on the target manifold is dropped.

Negative answers to the uniqueness of tangent cones have also been obtained in the case of non-rectifiable mass-minimizing currents: this failure was proved for positive-$(p,p)$ normal cycles in a complex manifold  by C. O. Kiselman in \cite{Kis} and in further works, see \cite{Bl} and \cite{BDM}, where necessary and sufficient conditions on the rate of decay of the mass ratio are given, under which the uniqueness holds (these works are closely related to the issue of tangent maps to plurisubharmonic maps). 

\medskip

The problems described so far are of elliptic type, the use of blow-up techniques goes however much further. For example in \cite{AKLR} the authors address a rectifiability issue for a measure arising in the context of conservation laws for hyperbolic PDEs and employ for the proof a delicate blow-up analysis. Turning our attention to a parabolic problem, the classification of possible singularities arising after finite time for a Mean-Curvature Flow is again built upon a blow-up analysis.

\medskip

In the present work we will be dealing with a a first order elliptic pro\-blem: we address the issue of the uniqueness of blow-ups for positive-$(1,1)$ normal cycles in almost complex manifolds. We present a new technique, which does not require the understanding of the rate of decay. 

We will now describe the setting and the connections to other problems, after which a sketch of the proof will be provided.

\medskip

\textbf{Setting}. Let $(\mathcal{M}, J)$ be a smooth almost complex manifold of dimension $2n+2$ (with $n \in \N^*$), endowed with a non-degenerate $2$-form $\om$ compatible with $J$. If $d \om = 0$ then we have a symplectic form, but we will not need to assume closedness. Let $g$ be the associated Riemannian metric, $g(\cdot, \cdot) := \om(\cdot, J \cdot)$.

\medskip

The form $\om$ is a semi-calibration on $\Mc$ for the metric $g$, i.e. the comass $\|\om\|^*$ is $1$; recall that the comass of $\om$ is defined to be
\[||\om||^*:=  \sup \{\langle \om_x, \xi_x \rangle: x \in \Mc,\, \xi_x \text{ is a unit simple $2$-vector at } x\},\] 
 
where the metric that we are using on $T_x \Mc$ is naturally $g_x$. Then $\|\om\|^*=1$ follows from $\om(\cdot, \cdot) = g(J \cdot, \cdot)$, recalling that $J$ is an orthogonal endomorphism. If $\om$ is closed, then we have a classical calibration, as in $\cite{HL}$.

\medskip

Among the oriented $2$-dimensional planes of the Grassmannians $G(x, T_x \Mc)$, we pick those that, represented as unit simple $2$-vectors, realize the equality $\langle \om_x, \xi_x \rangle = 1$. Define the set $\mathcal{G}(\om)$ of  \textit{$2$-planes calibrated by $\om$} as
\[\mathcal{G}(\om) := \cup_{x \in \Mc} \; {\Gc}_x := \cup_{x \in \Mc} \{\xi_x \in G(x, T_x \Mc): \langle \om_x, \xi_x \rangle = 1 \}.\]

\medskip

Before turning to the main object of these work, let us recall a few facts from Geometric Measure theory.

Currents were first introduced by De Rham as the dual space of smooth and compactly supported differential forms (see \cite{DR}). Some distinguished classes of currents have, since the sixties, played a key role in Geometric Measure Theory (see \cite{FF}, \cite{F}, \cite{Morgan}, \cite{G} or \cite{SimonNotes}).

\medskip

For De Rham currents we have the notions of boundary and mass, which we now recall in the case of interest, i.e. a $2$-dimensional De Rham current $C$ (the case of general dimension is completely analogous). 

The boundary $\p C$ of $C$ is the $1$-dimensional current characterized by its action on an arbitrary compactly supported one-form $\alpha$ as follows:

\be
\nonumber
(\p C)(\alpha) := C(d \alpha) = 0 .
\ee

The mass of $C$ is $$M(C):= \sup \{C(\beta): \beta \text{ compactly supported $2$-form}, ||\beta||^* \leq 1 \}.$$

A De Rham current $C$ such that $M(C)$ and $M(\p C)$ are finite is called a \textit{normal current}. Any current $C$ of finite mass is representable by integration (see \cite{G} pages 125-126), i.e. there exist 

(i) a positive Radon measure $\|C\|$,

(ii) a generalized tangent space $\vec{C}_x \in \La_2 \; (T_x \Mc)$, that is defined for $\|C\|$-a.a. points $x$, is $\|C\|$-measurable and has\footnote{The mass-norm for $2$-vectors is defined in duality with the comass on two-forms. The unit ball for the mass-norm on $\Lambda_2 \R^{2n+2}$ is the convex envelope of unit simple $2$-vectors.} mass-norm $1$,

such that the action of $C$ on any $2$-form $\beta$ with compact support is expressed as follows

\be
\nonumber
C(\beta) = \int_{\Mc} \langle \beta, \vec{C} \rangle d\|C\| .
\ee

A current with zero boundary is shortly called a \textit{cycle}. We will consider a \textbf{$\om$-positive} normal $2$-cycle $T$. Equivalent notions of $\om$-positiveness (see \cite{HL} or \cite{HL2}) are

\begin{itemize}
 \item $\vec{T} \, \in \text{convex hull of} \Gc(\om) \;\;\;  \|T\|$-a.e.
 
 \item $\langle \om ,\vec{T} \rangle =1 \;\;\;  \|T\|$-a.e.
\end{itemize}

The last condition is clearly equivalent to the important equality

\be
\label{eq:M=action}
T(\om) = \int_{\Mc} \langle \om, \vec{T} \rangle d\|T\| = M(T).
\ee

Remark that for arbitrary currents $M(C):= \sup \{C(\beta): ||\beta||^* \leq 1 \}$ and in general this $\sup$ need not be achieved. Also remark that for currents of finite mass the action can be extended to forms with non-compact support (actually to forms with merely bounded Borel coefficients, see \cite{G} page 127). So $T(\om)$ in (\ref{eq:M=action}) makes sense.

\medskip

In the case when $\om$ is closed, from (\ref{eq:M=action}) one also gets the important fact that a $\om$-positive $T$ is (locally) homologically mass-minimizing (see \cite{HL}). In the case of a non-closed $\om$, the same argument shows that a $\om$-positive cycle $T$ is locally an almost-minimizer of the mass (also called $\la$-minimizer). When the normal cycle is actually rectifiable (see \cite{F} or \cite{G} for definitions) a common term used, instead of $\om$-positive, is $\om$-(semi)calibrated.

\medskip

In the case we are investigating there is a useful equivalent characterization for the fact that a unit simple $2$-vector at $x$ is in $\Gc_{x}$, i.e. it is $\om_x$-calibrated. Indeed, testing on $w_1 \wedge w_2$ such that $w_1$ and $w_2$ are unit orthogonal vectors at $x$ for $g_x$ and recalling that $J$ is an othogonal endomorphism of the tangent space we get

\be
\label{eq:JandOm}
\om_x(w_1 \wedge w_2) = 1 \Leftrightarrow g_x(J_x (w_1),  w_2) =1 \Leftrightarrow J_x (w_1)= w_2.
\ee

Thus a $2$-plane is in $\Gc_x$ if an only if it is $J_x$-invariant or, in other words, if an only if it is $J_x$-holomorphic. 

So an equivalent way to express $\om$-positiveness is that $\|T\|$-a.e. $\vec{T}$ belongs to the convex hull of $J$-holomorphic simple unit $2$-vectors, in particular $\vec{T}$ itself is  $J$-invariant. For this reason $\om$-positive normal cycles are also called positive-$(1,1)$ normal cycles\footnote{We are using the term \textit{dimension} for a current as it is customary in Geometric Measure Theory, i.e. the dimension of a current is the degree of the forms it acts on. Remark however that in the classical works on positive currents and plurisubharmonic functions, e.g. \cite{L} or \cite{Siu}, our $2$-cycle in $\C^{n+1}$ would actually be called a current of bidimension $(1,1)$ and bidegree $(n,n)$.}. Remarkably the $(1,1)$-condition only depends on $J$, so a positive-$(1,1)$ cycle is $\om$-positive for any $J$-compatible couple $(\om, g)$.

\medskip

Positive cycles satisfy an important \textit{almost monotonicity property}: at any point $x_0$ the mass ratio $\frac{M(T \res B_r(x_0))}{\pi r^{2}}$ is an almost-increasing function of $r$, i.e. it can be expressed as a weakly increasing function of $r$ plus an infinitesimal of $r$. The precise statement can be found in section \ref{result}.

\medskip

Monotonicity yields a well-defined limit $$\nu(x_0) := \lim_{r \to 0} \frac{M(T \res B_r(x_0))}{\pi r^{2}} .$$ This is called the (two-dimensional) \textbf{density} of the current $T$ at the point $x_0$ (\textit{Lelong number} in the classical literature, see \cite{L}). The almost monotonicity property also yields that the density is an \underline{upper semi-continuous} function. 

\medskip

Consider a dilation of $T$ around $x_0$ of factor $r$ which, in normal coordinates around $x_0$, is expressed by the push-forward of $T$ under the action of the map  $\displaystyle \frac{x-x_0}{r}$:

\be
\label{eq:defblowup}
(T_{x_0,r} \res B_1 )(\psi):=\left[ \left( \frac{x-x_0}{r}\right)_\ast T \right](\chi_{B_1} \psi) = T \left( \chi_{B_r(x_0)}\left(\frac{x-x_0}{r}\right)^\ast \psi\right).
\ee

The current $T_{x_0,r}$ is positive for the semi-calibration $\om_{x_0,r}:=\frac{1}{r^2}(r|x-x_0|)^*\om$, with respect to the metric $g_{x_0,r}(X,Y):=\frac{1}{r^2} g\left((r|x-x_0|)_* X, (r|x-x_0|)_* Y\right)$. We thus have the equality $M(T_{x_0,r} \res B_1 ) = \frac{M(T \res B_r(x_0))}{r^{2}}$, where the masses are computed respectively with respect to $g_{x_0,r}$ and $g$. 

The fact that $\displaystyle \frac{M(T \res B_r(x_0))}{r^{2}}$ is monotonically almost-decreasing as $r \downarrow 0$ gives that, for $r\leq r_0$ (for a small enough $r_0$), we are dealing with a family of currents $\{T_{x_0,r} \res B_1\}$ that satisfy the hypothesis of Federer-Fleming's compactness theorem (see \cite{G} page 141) with respect to the flat metric (the metrics $g_{x_0,r}$ converge, as $r \to 0$, uniformly to the flat metric $g_0$). 

Thus there exist a sequence $r_n \to 0$ and a boundaryless current $T_\infty$ such that
\[T_{x_0,r_n} \res B_1 \to T_\infty .\] 
This procedure is called the \textit{blow up limit} and the idea goes back to De Giorgi \cite{DG}. Any such limit $T_\infty$ turns out to be a cone (a so called \textbf{tangent cone} to $T$ at $x_0$) with density at the origin the same as the density of $T$ at $x_0$. Moreover $T_\infty$ is $\om_{x_0}$-positive. 

\medskip

The main issue regarding tangent cones is whether the limit $T_\infty$ depends or not on the sequence $r_n \downarrow 0$ yielded by the compactness theorem, i.e. whether $T_\infty$ is \textbf{unique or not}. It is not hard to check that any two sequences $r_n \to 0$ and $\rho_n \to 0$ fulfilling $a \leq \frac{r_n}{\rho_n} \leq b$ for $a,b>0$ must yield the same tangent cone, so non-uniqueness can arise for sequences with different asymptotic behaviours.

\medskip

The fact that a current possesses a unique tangent cone is a symptom of regularity, roughly speaking of regularity at infinitesimal level. It is generally expected that currents minimizing (or almost-minimizing) functionals such as the mass should have fairly good regularity properties. This issues are however hard in general.

\medskip

The uniqueness of tangent cones is known for some particular classes of \underline{integral} currents, namely for mass-minimizing integral cycles of dimension $2$ (\cite{W}) and for general semi-calibrated integral $2$-cycles (\cite{PR}). 

\medskip

Passing more generally to normal currents, things get harder. Many examples of $\om$-positive normal $2$-cycles can be given by taking a family of pseudoholomorphic curves and assigning a positive Radon measure on it (this can be made rigorous). However $\om$-positive normal $2$-cycles need not be necessarily of this form, as the following example shows.

\begin{es}
In $\R^4 \cong \C^2$, with the standard complex structure, consider the unit sphere $S^3$ and the standard contact form $\gamma$ on it. 

The $2$-dimensional current $C_1$ supported in $S^3$ and dual to $\gamma$, i.e. defined by $C_1(\beta) := \int_{S^3} \gamma \wedge \beta \; d {\Hc}^3$, is positive-$(1,1)$ and its boundary is given by $\p C_1 (\alpha) := \int_{S^3} d \gamma \wedge \alpha \; d {\Hc}^3$, i.e. the boundary is the $1$-current given by the uniform Hausdorff measure on $S^3$ and the Reeb vector field.

Now consider the positive-$(1,1)$ cone $C$ with vertex at the origin, obtained by assigning the uniform measure $\frac{1}{4\pi}{\Hc}^2$ on $\CP^1$, i.e. $C$ is obtained by taking the family of holomorphic disks through the origin and endowing it with a unifom measure of total mass $1$. The current $C_2:=C \res (\R^4 \setminus \overline{B^4_1(0)})$ has boundary $\p C_2 = -\p C_1$, therefore $C_1 + C_2$ is a positive-$(1,1)$ cycle.
\end{es}

This construction shows that a $\om$-positive normal $2$-cycle $T$ is not very rigid and it is not true that, restricting for example to a ball $B$, the current $T \res B$ is the unique minimizer for its boundary (which is instead true for integral cycles). This can be interpreted as a \textit{lack of unique continuation} for these currents.

\medskip

This issue reflects into the fact that the uniqueness of tangent cones to $\om$-positive normal $2$-cycles \textbf{fails} in general, already in the case of the complex manifold $(\C^n, J_0)$, where $J_0$ is the standard complex structure: this was proven by Kiselman \cite{Kis}. Further works extended the result to arbitrary dimension and codimension (see \cite{Bl} and \cite{BDM}, where conditions on the rate of convergence of the mass ratio are given, under which uniqueness holds). 

While in the integrable case $(\C^n, J_0)$ positive cycles have been studied quite extensively, there are no results avaliable when the structure $J$ is \textit{almost complex}.

\medskip

In this work we prove the following result:

\begin{thm}
\label{thm:main1}
Given an almost complex $(2n+2)$-dimensional manifold $(\mathcal{M}, J, \om, g)$ as above, let $T$ be a positive-$(1,1)$ normal cycle, or equivalently a $\om$-positive normal $2$-cycle.

Let $x_0$ be a point of positive density $\nu(x_0) >0$ and assume that there is a sequence $x_m \to x_0$ of points $x_m \neq x_0$ all having positive densities $\nu(x_m)$ and such that $\nu(x_m) \to \nu(x_0)$. 

Then the tangent cone at $x_0$ is unique and is given by $\nu(x_0) \llbracket D \rrbracket$ for a certain $J_{x_0}$-invariant disk $D$.
\end{thm}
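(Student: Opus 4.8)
The plan is to replace the whole family of dilations by a single geometric operation: the blow up of the ambient manifold at the base point $x_0$, transplanted from algebraic and symplectic geometry into the almost complex category. First I would construct a blow up $\beta \colon \ti{\mathcal{M}} \to \mathcal{M}$ at $x_0$, producing an exceptional divisor $E \cong \CP^{n}$ over $x_0$ together with an almost complex structure $\ti J$ making $\beta$ pseudo-holomorphic and $E$ a $\ti J$-holomorphic divisor, with $\beta$ a diffeomorphism away from $E$; on $\ti{\mathcal M}$ I would fix a compatible pair $(\ti\om,\ti g)$. Since plurisubharmonic potentials are unavailable here, every property of $\beta$ must be extracted directly from the geometry of $J$ near $x_0$ and from the almost monotonicity of the mass ratio.

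Next I would lift $T$ to its proper transform $\ti T := \ov{(\beta^{-1})_\ast \left( T \res (\mathcal M \setminus \{x_0\})\right)}$ and check that $\ti T$ is again a positive-$(1,1)$ normal current for $\ti J$, of finite mass and with boundary supported in $E$. The point $x_0$ is $\|T\|$-negligible (positivity of the density forces only a $2$-dimensional mass ratio, not an atom), so away from $E$ the lift is the isomorphic image of $T$ and all the work concentrates in a tube around $E$, where $\beta$ degenerates. There the density of $\ti T$ must be shown to be well defined and upper semicontinuous and, because $\beta$ is pseudo-holomorphic, to agree with the density of $T$ at corresponding points away from $E$.

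The geometric heart of the argument is to read the tangent cones off the behaviour of $\ti T$ near $E$. A positive-$(1,1)$ cone of dimension $2$ is a superposition of $J_{x_0}$-holomorphic lines through its vertex (exactly as in the Example of the Introduction), so every tangent cone at $x_0$ has the form $\int_{\CP^n}\llbracket D_\xi\rrbracket\, d\mu(\xi)$ for a positive measure $\mu$ on $\CP^n \cong E$ with $\mu(\CP^n)=\nu(x_0)$, and this measure $\mu$ is precisely the trace of $\ti T$ on $E$; a single disk corresponds to $\mu$ being one Dirac mass. Establishing that this trace $\ti T \cap E$ is well defined, independent of the blow up sequence $\{r_n\}$, and of total mass exactly $\nu(x_0)$ is the step I expect to be the main obstacle, precisely because the usual potential-theoretic tools cannot be used and one must substitute the almost monotonicity formula together with direct mass estimates in the tube around $E$.

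Finally I would feed in the density hypothesis to force $\mu$ to be a single atom. The points $x_m \to x_0$ lift to $\ti x_m$ converging, along a subsequence, to some $y_0 \in E$, and upper semicontinuity of the density on $\ti{\mathcal M}$ together with $\nu(x_m)\to\nu(x_0)$ yields that the trace carries an atom at $y_0$ with $\mu(\{y_0\})\ge\ti\nu(y_0)\ge\nu(x_0)$. Since $\mu\ge 0$ and $\mu(\CP^n)=\nu(x_0)$, this forces $\mu = \nu(x_0)\,\delta_{y_0}$, leaving no mass in any other direction (so even the limiting direction of the $x_m$ turns out to be unique). As the trace is intrinsic to $\ti T$ and makes no reference to the sequence $\{r_n\}$, every tangent cone must equal $\nu(x_0)\llbracket D\rrbracket$, where $D$ is the $J_{x_0}$-complex line determined by $y_0$, giving at once uniqueness and the asserted form.
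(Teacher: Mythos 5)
Your overall strategy coincides with the paper's: an almost complex version of the algebraic blow up at $x_0$, the proper transform of $T$ shown to be a positive-$(1,1)$ normal \emph{cycle} of finite mass, the representation of every tangent cone as $\int_{\CP^n}\llbracket D_\xi\rrbracket\,d\mu(\xi)$ with $\mu\ge 0$ and $\mu(\CP^n)=\nu(x_0)$, and the final saturation argument forcing $\mu$ to be a single Dirac mass. However, the pivot of your argument --- that the measure $\mu$ of a tangent cone ``is precisely the trace of $\ti T$ on $E$,'' a single object intrinsic to $\ti T$ and independent of the dilation sequence $\{r_n\}$ --- is not just the main obstacle, it is false as stated. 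If such a sequence-independent trace determined the tangent cone, you would have proved uniqueness of tangent cones for \emph{all} positive-$(1,1)$ normal cycles with no hypothesis on the densities $\nu(x_m)$, contradicting Kiselman's counterexamples in $\C^n$ cited in the paper. Different sequences $r_n$ genuinely can produce different cones, hence different measures $\mu$; what is sequence-independent is the proper transform $P$ of $T$ itself, not the measures representing the various cones.

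The paper circumvents this by never identifying $\mu$ with a trace of $P$. Instead, for each sequence $r_n$ it forms the proper transforms $P_n$ of the dilated currents $T_{0,r_n}$, shows $P_n\rightharpoonup P_\infty$ where $P_\infty$ is the proper transform of the particular cone $T_\infty$, and establishes the atom in two steps that your sketch skips: (i) for each fixed $n$, the lifted points $p_m=\Phi^{-1}(x_m)$ and upper semicontinuity of the density give $\nu_{P_n}(p_0)\ge\kappa$ at $p_0=([1,0,\dots,0],0)$; (ii) because the $P_n$ are positive for semicalibrations $\vt_n\to\vt_0$ with uniform Lipschitz bounds, mass is \emph{continuous} (not merely lower semicontinuous) under the weak convergence $P_n\rightharpoonup P_\infty$, so the density lower bound survives in the limit, $\nu_{P_\infty}(p_0)\ge\kappa$. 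Only then does the special structure of $P_\infty$ --- a $\tau$-weighted family of fibers $L^X$ --- convert this density bound into $\tau(\{y_0\})\ge\kappa$, and with $\kappa=\nu(x_0)$ the total mass is saturated. Step (ii), the transfer of the density bound from the sequence to the limit via continuity of mass for positive cycles, is exactly the ingredient your proposal is missing, and it is what replaces the nonexistent sequence-independent trace.
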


The notation $\llbracket D \rrbracket$ stands for the current of integration on $D$. Our proof actually yields the stronger result stated in theorem \ref{thm:main2}.

\medskip

In the integrable case $(\C^n, J_0)$, Siu \cite{Siu} proved a beautiful and remarkable regularity theorem, which in our situation states the following: given $c>0$, the set of points of a positive-$(1,1)$ cycle of density $\geq c$ is made of analytic varieties each carrying a positive, real, constant multiplicity. Therefore, in the integrable case, theorem \ref{thm:main1} follows from Siu's result. 

\medskip

In the non-integrable case, on the other hand, there are no regularity results avaliable at the moment. The proofs of Siu's theorem given in the integrable case, see \cite{Siu}, \cite{Kis2}, \cite{L2}, \cite{D}, strongly rely on a connection with a plurisubharmonic potential for the current, which is not avaliable in the almost complex setting.

In addition to the interest for tangent cones themselves, theorems \ref{thm:main1} and \ref{thm:main2} are a first step towards a regularity result analogous to the one in \cite{Siu}, this time in the non-integrable setting (they can be seen as an infinitesimal version of that). The quest for such a regularity result is strongly motivated by several geometric issues, problems where the structure must be perturbated from a complex to almost complex one, in order to ensure some transversality conditions. Some of these are discussed in \cite{DoT}, \cite{RT1}, \cite{Ti}, \cite{Ti2}. We give here an example related to the study of pseudo-holomorphic maps into algebraic varieties, as those analyzed in \cite{RT1}. Indeed, if $u: M^4 \rightarrow \CP^1$ is pseudoholomorphic and weakly approximable as in \cite{RT1}, with $M^4$ a compact closed $4$-dimensional almost-complex manifold, denoting by $\varpi$ the symplectic form on $\CP^1$, then the $2$-current $U$ defined by $U(\beta):=\int_{M^4} u^*\varpi \wedge \beta$ is a positive-$(1,1)$ normal cycle in $M^4$. As explained in \cite{RT1}, the singular set of $u$ is of zero $\Hc^2$-measure and coincides with the set of points where the density of $U$ is $\geq \epsilon$, for a positive $\epsilon$ depending on $M^4$ (this is a so-called $\eps$-regularity result, see \cite{SU}). Then we would be reduced, in order to understand singularities of $u$, to the study of points of density $\geq \epsilon$ of $U$. Knowing that such a set is made of pseudoholomorphic subvarieties, together with the fact that it is $\Hc^2$-null, would imply that the singular set is made of isolated points, the same result achieved in \cite{RT1} with different techniques. 

The strategy might then be applied to other dimensions. Positive-$(1,1)$ cycles, or more generally other calibrated currents, might also serve for other kind of problems, in which $\eps$-regularity results play a role, for example when dealing with some Yang-Mills fields for high dimensional Gauge Theory (see for example the case of anti-self-dual instantons in section 5 of the survey \cite{Ti2}).

\medskip

\textbf{Sketch of the proof}. The key idea for the proof of our result is to realize for our current a sort of ``\textit{algebraic blow up}''. 

This is a well-known construction in Algebraic and Symplectic Geometry, with the name \textit{``blow up''}. To avoid confusion we will call it \textit{algebraic blow up}, since we have already introduced the notion of blow up as limit of dilations, as customary in Geometric Measure Theory. We now briefly recall the \textit{algebraic blow up} in the complex setting (see figure \ref{fig:blowup}).

\medskip

\textit{Algebraic blow up} (or \textit{proper transform}), (see \cite{MS}). Define $\widetilde{\C}^{n+1}$ to be the submanifold of $\CP^n \times \C^{n+1}$ made of the pairs $(\ell, (z_0, ... z_n))$ such that $(z_0, ... z_n) \in \ell$. 

$\widetilde{\C}^{n+1}$ is a complex submanifold and inherits from $\CP^n \times \C^{n+1}$ the standard complex structure, which we denote $I_0$.  The metric $\G_0$ on $\widetilde{\C}^{n+1}$ is inherited from the ambient $\CP^n \times \C^{n+1}$, that is endowed with the product of the Fubini-Study metric on $\CP^n$ and of the flat metric on $\C^{n+1}$. Let $\Phi:\widetilde{\C}^{n+1} \rightarrow \C^{n+1}$ be the projection map $(\ell, (z_0, ... z_n)) \rightarrow (z_0, ... z_n)$. $\Phi$ is holomorphic for the standard complex structures $J_0$ on $\C^{n+1}$ and $I_0$ on $\widetilde{\C}^{n+1}$ and is a diffeomorphism between $\widetilde{\C}^{n+1} \setminus \left( \CP^n \times \{0\} \right)$ and $\C^{n+1} \setminus  \{0\}$. Moreover the inverse image of $\{0\}$ is $\CP^n \times \{0\}$.

$\widetilde{\C}^{n+1}$ is a complex line bundle on $\CP^n$ but we will later view it as an orientable manifold of (real) dimension $2n+2$. The transformation $\Phi^{-1}$ (called \textit{proper transform}) sends the point $0 \neq (z_0, ... z_n) \in \C^{n+1}$ to the point\- $([z_0, ... z_n], (z_0, ... z_n)) \in \widetilde{\C}^{n+1} \subset \CP^n \times \C^{n+1}$. With the almost complex structures $J_0$ and $I_0$, the $J_0$-holomorphic planes through the origin are sent to the fibers of the line bundle, which are $I_0$-holomorphic planes.

\medskip

\textit{Outline of the argument}. We have a positive-$(1,1)$ normal cycle $T$ in $\C^{n+1}$, at the moment with reference to the standard complex structure $J_0$, and we want to to understand the tangent cones at the origin, that we assume to be a point of density $1$. By assumption we have a sequence of points $x_m \to 0$ with densities converging to $1$. Take a subsequence $x_{m_k}$ such that $\frac{x_{m_k}}{|x_{m_k}|} \to y$ for a point $y \in \p B_1$.

We can make sense (section \ref{blowup}) of the proper transform ${\pt}_* T$, although the map $\P^{-1}$ degenerates at the origin, and prove that ${\pt}_* T$ is a positive-$(1,1)$ normal cycle in $(\widetilde{\C}^{n+1}, I_0, \G_0)$.

The densities of points different than the origin are preserved under the proper transform (see the appendix), therefore the current ${\pt}_* T$ has a sequence of points converging to a certain $y_0$ (that lives in $\CP^n \times \{0\} \subset \widetilde{\C}^{n+1}$) and the densities of these points converge to $1$. More precisely $y_0 =H(y)$, where $H:S^{2n+1} \rightarrow \CP^n$ is the Hopf projection.

${\pt}_* T$ is a positive-$(1,1)$ cycle in $(\widetilde{\C}^{n+1}, I_0, \G_0)$, so by upper semi-continuity of the density $y_0$ is also a point of density $\geq 1$.

\medskip

Turning now to a sequence $T_{0, r_n}$ of dilated currents, with a limiting cone $T_\infty$, we can take the proper transforms ${\pt}_* T_{0, r_n}$ and find that all of them share the features just described, with the same $y_0$ (because radial dilations do not affect the fact that there is a sequence of points of density $1$ whose normalizations converge to $y$). But going to the limit we realize that ${\pt}_* T_{0, r_n}$ weakly converge to the proper transform ${\pt}_* T_\infty$, which is also positive-$(1,1)$. 

The mass is continuous under weak convergence of positive (or calibrated) currents, therefore $y_0$ is a point of density $\geq 1$ for ${\pt}_* T_\infty$. This limit, however, is of a very peculiar form, being the transform of a cone. Recall that the fibers of $\widetilde{\C}^{n+1}$ are holomorphic planes coming from holomorphic planes through the origin of $\C^{n+1}$. Since $T_\infty$ is a positive-$(1,1)$ cone, it is made of a weighted family of holomorphic disks through the origin, as described in (\ref{eq:11link}), and the weight is a positive measure. Then ${\pt}_* T_\infty$ is made of a family of fibers of the line bundle $\widetilde{\C}^{n+1}$ with a positive weight. Then the fact that $y_0$ has density $\geq 1$ implies that the whole fiber $L^{y_0}$ at $y_0$ is counted with a weight  $\geq 1$. Transforming back, $T_\infty$ must contain the plane $\P(L^{y_0})$ with a weight $\geq 1$. 

But the density of $T$ at the origin is $1$, so there is no space for anything else and $T_\infty$ \textbf{must be} the disk $\P(L^{y_0})$ with multiplicity $1$. Since we started from an arbitrary sequence $r_n$, the proof is complete, and it is also clear that $H\left(\frac{x_{m}}{|x_{m}|}\right)$ cannot have accumulation points other than $y_0$.

\medskip

In the almost complex setting we need to adapt the algebraic blow up, respecting the almost complex structure. 

In the next section we recall some facts on monotonicity and tangent cones for $\om$-positive cycles and state the stronger theorem \ref{thm:main2}.

In section \ref{tools} we construct suitable coordinates, used in section \ref{blowup} for the almost complex implementation of the algebraic blow up. In section \ref{blowup} we also prove that the proper transform actually yields a current of finite mass and without boundary. The appendix contains two lemmas: pseudo holomorphic maps preserve both the $(1,1)$-condition and the densities. With all this, in section \ref{proof} we conclude the proof.

\medskip

\textbf{Aknowledgments}. I wish to thank Tristan Rivi\`ere, who introduced me to $(1,1)$-currents and stimulated me to work on the subject. 

This work was partially supported by the Swiss Polytechnic Federal Institute Graduate Research Fellowship ETH-01 09-3.

\section{Tangent cones to positive-$(1,1)$ cycles.}
\label{result}

Given an almost complex $(2n+2)$-dimensional manifold $(\mathcal{M}, J, \om, g)$, let $T$ be a $\om$-positive normal $2$-cycle. Tangent cones are a local matter, it suffices then to work in a chart around the point under investigation.

\medskip

One of the key properties of positive currents is the following \textit{almost monotonicity property} for the mass-ratio. The statement here follows from proposition \ref{Prop:monotonicityapp} in the appendix, which is in turn borrowed from \cite{PR}.

\medskip

\begin{Prop}
\label{Prop:monotonicity}
Let $T$ be a $\om$-positive normal cycle in an open and bounded set of $\R^{2n+2}$, endowed with a metric $g$ and a semicalibration $\om$. We assume that $g$ and $\om$ are $L$-Lipschitz for some constant $L>1$ and that $\frac{1}{5}\mathbb{I}\leq g \leq 5\mathbb{I}$, where $\mathbb{I}$ is the identity matrix, representing the flat metric.

Let $B_r(x_0)$ be the ball of radius $r$ around $x_0$ with respect to the metric $g_{x_0}$ and let $M$ be the mass computed with respect to the metric $g$. There exists $r_0>0$ depending only on $L$ such that, for any $x_0$ and for $r \leq r_0$ the mass ratio $\frac{M(T \res B_r(x_0))}{\pi r^{2}}$ is an almost-increasing function in $r$, i.e. $\displaystyle \frac{M(T \res B_r(x_0))}{\pi r^{2}}=R(r) + o_r(1)$ for a function $R$ that is monotonically non-increasing as $r \downarrow 0$ and a function $o_r(1)$ which is infinitesimal of $r$. 

Independently of $x_0$, the perturbation term $o_r(1)$ is bounded in modulus by $C \cdot L \cdot r$, where $C$ is a universal constant. 
\end{Prop}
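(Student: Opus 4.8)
The plan is to deduce this localized, quantitative statement from the monotonicity proposition in the appendix (Proposition \ref{Prop:monotonicityapp}, taken from \cite{PR}), while tracking the dependence of the constants on the Lipschitz bound $L$. First I would reduce to the case $x_0=0$ by translating, and straighten the metric at the base point: after a linear change of coordinates with bounded distortion (thanks to $\frac15\mathbb{I}\le g\le 5\mathbb{I}$) we may assume $g_{x_0}$ is the Euclidean metric, so that $B_r(x_0)$ is a round ball. The $L$-Lipschitz hypotheses on $g$ and $\om$ persist up to a universal factor and give $g=g_{x_0}+O(Lr)$ and $\om=\om_{x_0}+O(Lr)$ on $B_r(x_0)$, which is the only way the ambient geometry enters the estimates.

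The core of the argument is the standard calibration-based comparison. Writing $m(r):=M(T\res B_r(x_0))$, the defining property of $\om$-positiveness gives $m(r)=(T\res B_r)(\om)$. For a.e.\ $r$ I would slice $T$ by the distance function $u(x)=|x-x_0|$ to produce the boundary slice $S_r:=\langle T,u,r\rangle$, a $1$-cycle supported on $\partial B_r(x_0)$ (it is a cycle because $\partial T=0$), and then form the cone $C_r$ over $S_r$ with vertex $x_0$, so that $\partial(T\res B_r-C_r)=0$. The cone-mass formula bounds $M(C_r)$ by $\tfrac{r}{2}$ times the tangential mass of $T$ on $\partial B_r$, and the coarea formula relates the latter to $m'(r)$; together these yield an inequality of the shape $m(r)\le\tfrac{r}{2}m'(r)+(\text{error})$.

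The error has two sources, each to be controlled by $O(Lr)$. Because $\om$ is only a semicalibration (comass $1$) rather than a genuine calibration, one has $C_r(\om)\le M(C_r)$ with the loss measured by how far $\vec{C_r}$ is from being $\om$-calibrated, which is $O(Lr)$ via the Lipschitz closeness $\om=\om_{x_0}+O(Lr)$. Because $\om$ need not be closed, comparing $(T\res B_r)(\om)$ with $C_r(\om)$ requires filling the cycle $T\res B_r-C_r$ by some $Q$ with $\partial Q=T\res B_r-C_r$ and estimating $(T\res B_r-C_r)(\om)=Q(d\om)\le\|d\om\|_\infty\,M(Q)$; the cone/isoperimetric filling bound $M(Q)\le C\,r\,m(r)$ then produces an error $\le C\,L\,r\,m(r)$. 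Feeding both contributions into $m(r)\le\tfrac{r}{2}m'(r)+C\,L\,r\,m(r)$ and rearranging gives a differential inequality of the form $\frac{d}{dr}\left(e^{CLr}\frac{m(r)}{\pi r^2}\right)\ge 0$ for $r\le r_0(L)$. Since $e^{CLr}=1+O(Lr)$ and the density ratio is bounded, this exhibits $\frac{m(r)}{\pi r^2}=R(r)+o_r(1)$ with $R$ the monotone (non-increasing as $r\downarrow 0$) part and $|o_r(1)|\le C\,L\,r$, as claimed.

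The main obstacle is bookkeeping these error terms uniformly in $x_0$ in the rough setting: one must carry out the slicing and cone construction for a current that is only normal (not rectifiable), justify the a.e.\ differentiability and coarea manipulations for $m(r)$, and verify that both the non-closedness of $\om$ and the deviation of $g$ from $g_{x_0}$ contribute errors bounded by a single universal constant times $Lr$, independent of the base point. Since all of this bookkeeping is already packaged in Proposition \ref{Prop:monotonicityapp}, the cleanest route is to invoke that statement and check that its hypotheses hold with the stated constants, reading off $r_0$ and the bound $C\cdot L\cdot r$ on the perturbation directly from it.
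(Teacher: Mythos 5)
Your proposal is correct and follows essentially the same route as the paper: the paper gives no independent argument for Proposition \ref{Prop:monotonicity} but simply derives it from the almost-monotonicity inequalities of Proposition \ref{Prop:monotonicityapp} (borrowed from \cite{PR}), exactly as you do in your final paragraph, with the factor $\frac{e^{Cr}\pm Cr}{r^2}$ supplying the monotone part $R(r)$ and the $O(L r)$ perturbation after using the boundedness of the mass ratio. Your intermediate sketch of the cone-comparison proof of the appendix proposition is consistent with \cite{PR} but is not needed, since the paper treats that result as a black box.
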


\medskip

The fact that $r_0$ and $C$ do not depend on the point yield that the density $\nu(x)$ of $T$ is an \underline{upper semi-continuous} function; the proof is rather standard.

\medskip

Another very important consequence of monotonicity is that the mass is \underline{continuous} and not just lower semi-continuous under weak convergence of semicalibrated or positive cycles. Basically this is due to the fact that computing mass for a $\om$-positive cycle amounts to testing it on the form $\om$, as described in (\ref{eq:M=action}); testing on forms is exactly how weak convergence is defined. This fact is of key importance for this work and will be formally proved when needed (see (\ref{eq:densitypassestolimit}) in section \ref{proof}). 

\medskip

Let us now focus on tangent cones. If we perform the blow up procedure around a point of density $0$, then the limiting cone is unique and is the zero-current. So in this situation there is no issue about the uniqueness of the tangent cone.

We are therefore interested in the limiting behaviour around a point $x_0$ of strictly positive density $\nu(x_0)>0$.

\medskip

From \cite{Bl} we know that any normal positive $2$-cone in $\C^{n+1}$ is a positive Radon measure on $\CP^n$. Combining\footnote{As explained in \cite{Kis} and \cite{Bl}, the family of possible tangent cones at a point $x_0$ must be a convex and connected subset of the space of $\om_{x_0}$-positive cones with density $\nu(x_0)$.} this with the fact that a tangent cone $T_{\infty}$ at $x_0$ to a $\om$-positive cycle is $\om_{x_0}$-positive and has density $\nu(x_0)$ at the vertex, we get that $T_{\infty}$ is represented by a Radon measure, with total measure $\nu(x_0)$, on the set of $\om_{x_0}$-calibrated $2$-planes. Precisely, there exists a positive Radon measure $\tau$ on $\CP^n$ such that, denoting by $D^X$ the $J_{x_0}$-holomorphic unit disk in $B^{2n+2}_1(0)$ corresponding to $X \in \CP^n$, the action of $T_{\infty}$ on any two-form $\beta$ is expressed as

\be
\label{eq:11link}
T_{\infty}(\beta) = \int_{\CP^n} \left\{ \int_{D^X} \langle \beta , \vec{D}^X \rangle \; d \mathcal{L} ^{2} \right\} d \tau(X) .
\ee

\medskip

Let $x_0$ be a point of positive density $\nu(x_0) >0$ and assume that there is a sequence $x_m \to x_0$ of points of positive density $\nu(x_m) \geq \ka >0$ for a fixed $\ka>0$. By upper-semicontinuity of $\nu$ it must be $\nu(x_0) \geq \ka$.

Blow up around $x_0$ for the sequence of radii $|x_m-x_0|$: up to a subsequence we get a tangent cone $T_\infty$. What can we immediately say about this cone?

\medskip

With these dilations, the currents $T_{x_0,|x_m-x_0|}$ always have a point $y_m := \frac{x_m-x_0}{|x_m-x_0|}$ on the boundary of $B_1$ with density $\nu(y_m) \geq \ka$. By compactness we can assume $y_m \to y \in \p B_1$. By monotonicity, for any fixed $\delta >0$, localizing to the ball $B_\delta(y)$ we find, using (\ref{eq:M=action}) and recalling from (\ref{eq:defblowup}) that $T_\infty$ and $T_{x_0,r}$ are positive respectively for $\om_{x_0}$ and $\om_{x_0,r}$,

\be
\nonumber
\begin{split}
 M(T_\infty \res B_\delta(y))= T_\infty(\chi_{B_\delta(y)} \om_{x_0}) = \lim_m T_{x_0,|x_m-x_0|}(\chi_{B_\delta(y)} \om_{x_0}) = \\
 \lim_m T_{x_0,|x_m-x_0|}\left[\frac{\chi_{B_\delta(y)}}{|x_m - x_0|^2} \left(|x_m - x_0| (x-x_0) \right)^* \om \right] =\\
 =\lim_m M(T_{x_0,|x_m-x_0|} \res B_\delta(y) ) \geq \ka \pi \delta^2,
\end{split}
\ee

which\footnote{This computation is an instance of the fact that the mass is continuous under weak convergence of positive currents, unlike the general case when it is just lower semi-continuous.} implies that $y$ has density $\nu(y) \geq \ka$. 

Therefore $T_\infty$ ``must contain'' $\ka \llbracket D \rrbracket$, where $D$ is the holomorphic disk through $0$ and $y$; i.e. $T_\infty - \ka \llbracket D \rrbracket$ is a $\om_{x_0}$-positive cone having density $\nu(x_0) - \ka$ at the vertex.

More precisely, what we have just shown the following well-known lemma. In the sequel $H:S^{2n+1} \to \CP^n$ denotes the standard Hopf projection.

\begin{lem}
Let $x_0$ be a point of positive density $\nu(x_0) >0$ and assume that there is a sequence $x_m \to x_0$, $x_m \neq x_0$, of points of positive density $\nu(x_m) \geq \ka >0$ for a fixed $\ka >0$. Let $\{y_\alpha\}_{\alpha \in A}$ be the set of accumulation points on $\CP^n$ for the sequence $y_m := H\left(\frac{x_m-x_0}{|x_m-x_0|}\right)$. Let $D_\alpha$ be the $J_{x_0}$-holomorphic disk in $T_{x_0}\Mc$ containing $0$ and $H^{-1}(y_\alpha)$. Then for every $\alpha \in A$ there is at least a tangent cone to $T$ at $x_0$ of the form $\ka \llbracket D_\alpha \rrbracket + \tilde{T}_\alpha$, for a $\om_{x_0}$-positive cone $\tilde{T}_\alpha$. 
\end{lem}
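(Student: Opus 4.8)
The plan is to reduce the statement to the computation displayed just above, carried out along a subsequence adapted to the chosen accumulation point. Fix $\alpha \in A$. Since $y_\alpha$ is an accumulation point of the sequence $y_m = H\!\left(\frac{x_m - x_0}{|x_m - x_0|}\right)$ in $\CP^n$, I would first pass to a subsequence (not relabelled) along which $y_m \to y_\alpha$. As $\partial B_1 \subset T_{x_0}\Mc$ is compact and $H$ is continuous, a further subsequence of $\frac{x_m - x_0}{|x_m - x_0|}$ converges to some $\bar y \in \partial B_1$ with $H(\bar y) = y_\alpha$; in particular $\bar y$ lies on $D_\alpha$, since $D_\alpha$ is the $J_{x_0}$-holomorphic (complex) line through the origin containing the whole Hopf fibre $H^{-1}(y_\alpha)$.

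Next I would blow up $T$ along the radii $r_m := |x_m - x_0|$ of this subsequence. By the compactness recalled in the Introduction (Federer--Fleming together with Proposition \ref{Prop:monotonicity}), after a further subsequence the dilated currents $T_{x_0, r_m} \res B_1$ converge weakly to a tangent cone $T_\infty$. Each dilated current carries the point $\frac{x_m - x_0}{r_m} \in \partial B_1$, whose density is still $\geq \kappa$ because the density at a point is preserved by the blow-up dilation; and these points converge to $\bar y$. Running the monotonicity and mass-continuity computation displayed above with $y$ replaced by $\bar y$ --- localizing to $B_\delta(\bar y)$, rewriting the mass as the action on $\om$ via (\ref{eq:M=action}) so that weak convergence passes to the limit, and invoking continuity of mass for $\om$-positive cycles --- I obtain $M(T_\infty \res B_\delta(\bar y)) \geq \kappa \pi \delta^2$ for all small $\delta$, hence that the density of $T_\infty$ at $\bar y$ is $\geq \kappa$.

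Finally I would convert this density lower bound into the claimed decomposition using the structure of positive-$(1,1)$ cones. By (\ref{eq:11link}) there is a positive Radon measure $\tau$ on $\CP^n$, of total mass $\nu(x_0)$, with $T_\infty$ the $\tau$-superposition of the holomorphic disks $D^X$ through the origin. Since $\bar y \neq 0$ lies on the single disk $D^{y_\alpha} = D_\alpha$ and on no other $D^X$, the density of this superposition at $\bar y$ equals the atom $\tau(\{y_\alpha\})$; thus $\tau(\{y_\alpha\}) \geq \kappa$. Splitting $\tau = \kappa\,\delta_{y_\alpha} + (\tau - \kappa\,\delta_{y_\alpha})$ with non-negative remainder and reading (\ref{eq:11link}) off this splitting yields $T_\infty = \kappa \llbracket D_\alpha \rrbracket + \tilde T_\alpha$, where $\tilde T_\alpha$ is the $\om_{x_0}$-positive cone associated with the remaining measure, as required.

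I expect the genuine content to sit in the mass-continuity step: for general currents the mass is only lower semi-continuous under weak convergence, and it is precisely the identity $M(\,\cdot\,) = (\,\cdot\,)(\om)$ for $\om$-positive cycles --- testing against the single fixed form $\om$ --- that upgrades this to continuity and lets the density survive the passage to the limit. The remaining points (the correct lift of $y_\alpha$ through the Hopf map and the identification of the off-vertex density of a cone with the corresponding atom of $\tau$) are routine once (\ref{eq:11link}) is granted.
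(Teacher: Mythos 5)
Your proposal is correct and follows essentially the same route as the paper: the paper proves this lemma by exactly the blow-up along the radii $|x_m-x_0|$ together with the displayed mass-continuity computation at the limit point $y$, and then reads off the conclusion from the cone structure (\ref{eq:11link}). Your added remark identifying the off-vertex density of the cone with the atom $\tau(\{y_\alpha\})$ is a detail the paper leaves implicit, but it is handled correctly.
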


In other words, each $\ka \llbracket D_\alpha \rrbracket$ ``must appear'' in at least one tangent cone. What about all other (possibly different) tangent cones that we get by choosing different sequences of radii?

The following result shows that \textbf{any} tangent cone to $T$ at $x_0$ ``must contain'' \underline{each} disk $\ka \llbracket D_\alpha \rrbracket$, for all $\alpha \in A$.

\begin{thm}
\label{thm:main2}
Given an almost complex $(2n+2)$-dimensional manifold $(\mathcal{M}, J, \om, g)$, let $T$ be a $\om$-positive normal $2$-cycle.

Let $x_0$ be a point of positive density $\nu(x_0) >0$ and assume that there is a sequence of points $\{x_m\}$ such that $x_m \to x_0$, $x_m \neq x_0$ and the $x_m$ have positive densities satisfying $\liminf_{m \to \infty} \nu(x_m) \geq \ka$ for a fixed $\ka >0$.

Let $\{y_\alpha\}_{\alpha \in A}$ be the set of accumulation points on $\CP^n$ for the sequence $y_m := H\left(\frac{x_m-x_0}{|x_m-x_0|}\right)$. Let $D_\alpha$ be the $J_{x_0}$-holomorphic disk in $T_{x_0}\Mc$ containing $0$ and $H^{-1}(y_\alpha)$.

Then the points $y_\alpha$'s are finitely many and \underline{any} tangent cone $T_\infty$ to $T$ at $x_0$ is such that $T_\infty - \oplus_\alpha \ka \llbracket D_\alpha \rrbracket$, is a $\om_{x_0}$-positive cone.

\end{thm}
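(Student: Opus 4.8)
The plan is to show that \underline{every} tangent cone $T_\infty$ assigns weight at least $\ka$ to each disk $D_\alpha$; finiteness of $A$ is then automatic. Recall from (\ref{eq:11link}) that any tangent cone at $x_0$ has the form $T_\infty=\int_{\CP^n}\llbracket D^X\rrbracket\, d\tau(X)$ for a positive Radon measure $\tau$ on $\CP^n$ with $\tau(\CP^n)=\nu(x_0)$. Hence the assertion that $T_\infty-\oplus_\alpha \ka\llbracket D_\alpha\rrbracket$ is $\om_{x_0}$-positive is equivalent to the single atomic inequality $\tau(\{y_\alpha\})\ge\ka$ for every $\alpha\in A$ (the subtracted cone corresponds to $\tau-\sum_\alpha\ka\,\delta_{y_\alpha}\ge 0$). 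Fixing one accumulation point $y_\alpha$, I would produce its atom by transplanting the problem, through the almost complex algebraic blow up of Sections \ref{tools}--\ref{blowup}, onto the blown-up manifold, where the disks $D^X$ become the \emph{disjoint} fibers $L^X$ of the line bundle.

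So first I would set up, in suitable coordinates around $x_0$, the proper transform ${\pt}_\ast T$ and recall (from Section \ref{blowup}) that it is a positive-$(1,1)$ normal cycle for $(I_0,\G_0)$ on the blown-up space, and (from the appendix) that ${\pt}$ preserves the densities of all points other than the vertex. Passing to a subsequence of the $x_m$ so that $\frac{x_m-x_0}{|x_m-x_0|}\to H^{-1}(y_\alpha)$, the transformed points accumulate at the single point $\hat y_\alpha := (y_\alpha,0)$ of the exceptional divisor $\CP^n\times\{0\}$, with densities bounded below, along that subsequence, by $\liminf_m\nu(x_m)\ge\ka$; upper semicontinuity of the density (Proposition \ref{Prop:monotonicity}) then gives that $\hat y_\alpha$ is itself a point of density $\ge\ka$ for ${\pt}_\ast T$. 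The key observation is that this persists \emph{after radial dilation}: for each fixed $n$ the current $T_{x_0,r_n}$ still carries the rescaled points $\tfrac{x_m-x_0}{r_n}$, whose normalizations are unchanged and still tend to $H^{-1}(y_\alpha)$, so the same argument shows $\hat y_\alpha$ has density $\ge\ka$ for every ${\pt}_\ast T_{x_0,r_n}$.

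Next I would pass to the limit $n\to\infty$. The delicate point, and the step I expect to be the main obstacle, is that the proper transform \emph{commutes with the blow-up limit}, i.e. ${\pt}_\ast T_{x_0,r_n}\rightharpoonup {\pt}_\ast T_\infty$, even though ${\P}^{-1}$ degenerates precisely on the exceptional divisor, which is exactly where the mass concentrates; this has to be controlled by the uniform mass bounds from monotonicity together with the finite-mass and boundaryless properties of ${\pt}_\ast T_\infty$ established in Section \ref{blowup}. Granting this, continuity of the mass under weak convergence of positive cycles, which holds because the mass is the action on the (transformed) semicalibration as in (\ref{eq:M=action}), precisely the quantity that weak convergence controls, lets the density bound pass to the limit, so $\hat y_\alpha$ is a point of density $\ge\ka$ for ${\pt}_\ast T_\infty$.

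Finally I would read off the atom. Since $T_\infty$ is a cone, ${\pt}_\ast T_\infty=\int_{\CP^n}\llbracket L^X\rrbracket\, d\tau(X)$ is a positive superposition of the fibers $L^X$, and distinct fibers meet $\CP^n\times\{0\}$ at distinct points; hence the only fiber through $\hat y_\alpha$ is $L^{y_\alpha}$, and the $\G_0$-density of ${\pt}_\ast T_\infty$ at $\hat y_\alpha$ equals the single weight $\tau(\{y_\alpha\})$, the contributions of the non-atomic part and of the nearby fibers to the mass ratio in shrinking balls being infinitesimal. Thus $\tau(\{y_\alpha\})\ge\ka$, which transported back through $\P$ means exactly that $T_\infty$ contains $\ka\llbracket D_\alpha\rrbracket$. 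Running this for each $\alpha$ yields $\tau\ge\sum_\alpha\ka\,\delta_{y_\alpha}$, whence $T_\infty-\oplus_\alpha\ka\llbracket D_\alpha\rrbracket$ is $\om_{x_0}$-positive; and since $\nu(x_0)=\tau(\CP^n)\ge \ka\,|A|$, the set $A$ is finite with $|A|\le\nu(x_0)/\ka$.
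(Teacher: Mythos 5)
Your proposal follows essentially the same route as the paper: pass to the almost complex proper transform, show that the transformed points force a density $\geq \ka$ at the corresponding point of the exceptional divisor for each $\left(\Phi^{-1}\right)_* T_{x_0,r_n}$, use the commutation of the proper transform with the blow-up limit (the paper's lemma \ref{lem:pinfty}) together with continuity of mass under weak convergence of positive cycles to transfer this to $\left(\Phi^{-1}\right)_* T_\infty$, and read off the atom of $\tau$ from the disjointness of the fibers $L^X$. You correctly identify the delicate step (no mass concentration on $\CP^n\times\{0\}$ in the limit) and the mechanism that resolves it, so the argument matches the paper's proof in both structure and substance.
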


\begin{oss}
It follows that the cardinality of the $y_\alpha$'s is bounded by $\left \lfloor \frac{\nu(x_0)}{\ka} \right \rfloor$. In particular, theorem \ref{thm:main1} follows from this result. 
\end{oss}

\section{Pseudo holomorphic polar coordinates}
\label{tools} \label{polar}

$T$ is $\om$-positive $2$-cycle of finite mass in a $(2n+2)$-dimensional almost complex manifold endowed with a compatible metric and form, $(\mathcal{M}, J, \om, g)$; $T$ is shortly called a $(1,1)$-normal cycle.

\medskip

Since tangent cones to $T$ at a point $x_0$ are a local issue it suffices to work in a chart. We can assume straight from the beginning to work in the geodesic ball of radius $2$, in normal coordinates centered at $x_0$; for this purpose it is enough to start with the current $T$ already dilated enough around $x_0$. Always up to a dilation, without loss of generality we can actually start with the following situation. 

$T$ is a $\om$-positive normal cycle in the unit ball $B^{2n+2}_2(0)$, the coordinates are normal, $J$ is the standard complex structure at the origin, $\om$ is the standard symplectic form at the origin, $\|\om - \om_0\|_{C^{2,\nu}(B^{2n+2}_2)}$ and $\|J - J_0\|_{C^{2,\nu}(B^{2n+2}_2)}$ are small enough.

\medskip

The dilations needed for the blow up are expressed by the map $\displaystyle \frac{x}{r}$ for $r>0$ (we are in a normal chart centered at the origin). So in these coordinates we need to look at the family of currents $$T_{0,r}:=\left( \frac{x}{r} \right)_* T.$$

\medskip

It turns out effective, however, to work in coordinates adapted to the almost-complex structure, as we are going to explain in this section.

\medskip

With coordinates $(z_0, ... z_n)$ in $\C^{n+1}$, we use the notation ($\eps$ is a small positive number)

\be
\label{eq:sce}
\tilde{\Sc}_{\eps}:=\{(z_0, z_1, ... z_n) \in B_{1+\eps}^{2n+2} \subset \C^{n+1}: |(z_1, ..., z_n)| < (1+\eps)|z_0|\}.
\ee

We have a canonical identification of $X = [z_0, z_1, ..., z_n] \in \CP^n$ with the $2$-dimensional plane $D^X=\{\zeta(z_0, z_1, ..., z_n): \zeta \in \C \}$, which is complex for the standard structure $J_0$. 

As $X$ ranges in the open ball 

$${\Vc}_{\eps} \subset \CP^n, \;\; {\Vc}_{\eps}:=\{[z_0, z_1, ..., z_n]:  |(z_1, ..., z_n)| < (1+\eps)|z_0|\} ,$$

the planes $D^X$ foliate the sector $\tilde{\Sc}_{\eps}$. We thus canonically get a \textit{polar foliation} of the sector, by means of holomorphic disks.

\medskip

Let the ball (of radius $2$) $B_2^{2n+2} \subset \R^{2n+2}$ be endowed with an almost complex structure $J$.
The same set as in (\ref{eq:sce}), this time thought of as a subset of $(B_2^{2n+2}, J)$, will be denoted by ${\Sc}_{\eps}$. 

We can get a \textit{polar foliation} of the sector ${\Sc}_{0}$, by means of $J$-pseudo holomorphic disks; this is achieved by perturbing the canonical foliation exhibited for $\tilde{\Sce}$. The case $n=1$ is lemma A.2 in the appendix of \cite{RT1}, the proof is however valid for any $n$: here is the statement.

\medskip

\textbf{Existence of a $J$-pseudo holomorphic polar foliation.}
There exists $\alpha_0 >0$ small enough such that, if $\|J - J_0\|_{C^{2,\nu}(B_2^{2n+2})} < \alpha_0$ and $J=J_0$ at the origin, then the following holds.

There exists a diffeomorphism 

\be
\label{eq:Psi}
\Psi:  \tilde{\Sc}_{\eps} \to  (B_2^{2n+2}, J)\;\; ,
\ee

that extends continuously up to the origin, with $\Psi(0)=0$, with the following properties (see top picture of figure \ref{fig:polar}):

\begin{description}
 \item (i) $\Psi$ sends the $2$-disk $D^X \cap \tilde{\Sc}_{\eps}$ represented by $X = [z_0, z_1, ... z_n] \in \CP^n$ to an embedded $J$-pseudo holomorphic disk through $0$ with tangent $D^X$ at the origin;
 
 \item (ii) the image of $\Psi$ contains $\Sc_0= B^{2n+2}_1 \cap \{|(z_1, ..., z_n)| < |z_0|\}$;
 
 \item (iii) $\|\Psi - Id\|_{C^{2,\nu}( {\Sc}_{\eps})} < C_0$, where $C_0$ is a positive constant that can be made as small as wished by assuming $\alpha_0$ small enough. 
\end{description}
 
The collection $\{\Psi\left(D^{Y}\right): Y \in {\Vc}_{\eps}\}$ of these embedded $J$-pseudo holomorphic disks foliates a neighbourhood of the sector ${\Sc}_{0}$; we will call it a \textit{$J$-pseudo holomorphic polar foliation}.

The proof (see \cite{RT1}) also shows that, in order to foliate ${\Sc}_{0}$, the $\eps$ needed in (\ref{eq:Psi}) can be made small by taking $\alpha_0$ small enough.

\medskip

\textbf{Rescale the foliation}. We are now going to use this \textit{polar foliation} to construct coordinates adapted to $J$.

\medskip

The result in \cite{RT1} actually shows that there exists $\alpha_0$ such that for all $\alpha \in [0, \alpha_0]$, if $\|J - J_0\|_{C^{2,\nu}(B_2^{2n+2})} = \alpha$ and $J=J_0$ at the origin, then there is a map $\Psi_\alpha$ yielding a polar foliation with $\|\Psi_\alpha - Id\|_{C^{2,\nu}( {\Sc}_{\eps})} < o_\alpha(1)$ (an infinitesimal of $\alpha$).

\medskip

We make use however only of the result for $\alpha_0$, as we are about to explain. When we dilate the current $T$ in normal coordinates with a factor $r$ and look at the dilated current in the new ball $B_2^{2n+2}$, we find that it is positive-$(1,1)$ for $J_r$, where $J_r:= (\la_r^{-1})^*J$, i.e. $J_r(V):= (\la_r)_* [J\left((\la_r^{-1})_*V\right)]$.

As $r \to 0$ it holds $\|J_r - J_0\|_{C^{2,\nu}(B_2^{2n+2})} \to 0$. Once we have applied the existence result of the $J$-pseudo holomorphic polar foliation to the ball $B_2^{2n+2}$ endowed with $J$ (assuming $\|J-J_0\|_{C^2}<\alpha_0$), then we get a $J_r$-pseudo holomorphic polar foliation of $(B_2, J_r)$ just as follows.

\begin{figure}[h]
\centering
 \includegraphics[width=9cm]{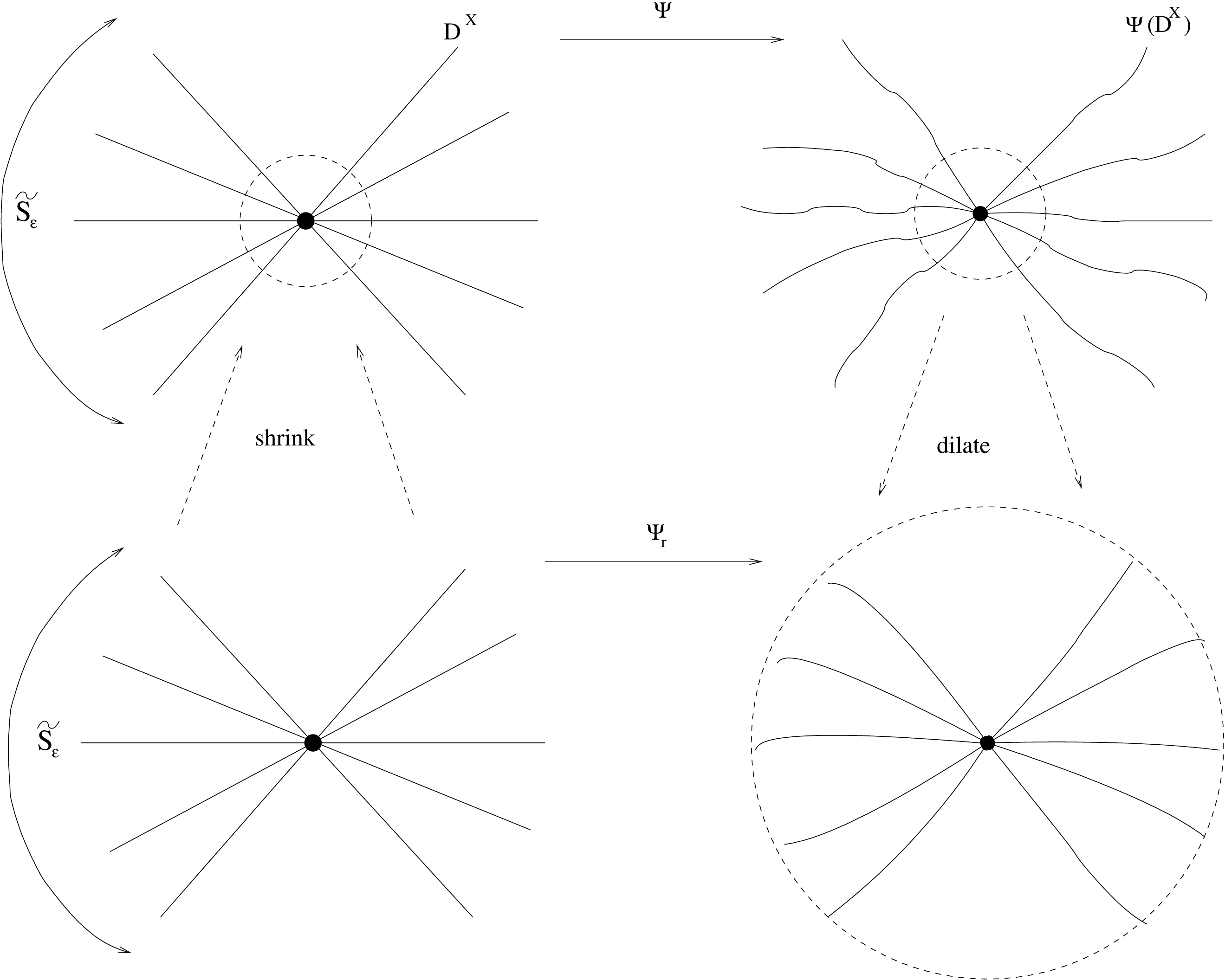}
\caption{$J$-pseudo holomorphic polar foliation via $\Psi$ and $J_r$-pseudo holomorphic polar foliation via $\Psi_r$.}
 \label{fig:polar}
\end{figure}

Let $\lt_r$ be the dilation (in euclidean coordinates) $x \to \frac{x}{r}$; we use the tilda to remind that we are in $\tilde{\Sce}$. The same dilation in normal coordinates in $\Psi({\Sc}_{\eps}) \subset (B_2^{2n+2},J)$ is denoted by $\la_r$. Introduce the map (see figure \ref{fig:polar})

\be
\label{eq:Psir}
\begin{array}{cccc}
\Psi_r: & \tilde{\Sce} & \rightarrow & \left( B^{2n+2}_{2}, J_r \right)\\
 & & \\
 & x & \rightarrow & \la_r \circ \Psi \circ \lt_r^{-1} (x) .
\end{array}
\ee

$\Psi_r$ clearly yields a $J_r$-pseudo holomorphic polar foliation for the ball $B_2^{2n+2}$ endowed with $J_{r}$. Remark, in view of (\ref{eq:equivdilate}), that $\Psi_r$ can actually be defined on the sector $\lt_r(\tilde{\Sce})$.

\medskip

From the proof in \cite{RT1} we get that\footnote{This follows, with reference to the notation in \cite{RT1}, by observing that the map $\Xi_q$ on page 84 (associated to the diffeomorphism that we called $\Psi$) satisfies $\Xi_q \to Id$ uniformly as $q \to 0$, by the condition that above we called (i). Then the $C^{1, \nu}$ bounds there and Ascoli-Arzel\`a's theorem (applied to $D \Psi_r$) yield that $\Psi_r \to Id$ in $C^1$.} $\Psi_r \to Id \text{ in $C^1(\Sce)$ as $r \to 0$}.$









\medskip

\medskip

\textbf{Adapted coordinates}. The aim is to pull back the problem on $\tilde{\Sce}$ via $\Psi$. Endow for this purpose $\tilde{\Sce}$ with the almost complex structure $\Psi^* J$. 

Recall that we have in mind to look at $T_{0,r}$ in $\left( B^{2n+2}_{2}, J_r \right)$ as $r \to 0$. So we are going to study the family

$$\left(\Psi_r ^{-1}\right)_* \left[T_{0,r} \res \left( \Psi_r(\tilde{\Sce})\right)\right]$$

as $r \to 0$. For each $r>0$ these currents are positive-$(1,1)$ normal cycles in $\tilde{\Sce}$ endowed with the almost complex structure $\Psi_r^* J_r$, as proved in lemma \ref{lem:A1}.

It is elementary to check that 
$$\Psi_r^* J_r = (\lt_r^{-1})^* \Psi^* \la_r^* J_r = (\lt_r^{-1})^* \Psi^* J,$$ 
so we can equivalently look, for $r>0$, at $\tilde{\Sce}$ with the almost complex structure $(\lt_r^{-1})^* \Psi^* J$. The latter is obtained from $(\tilde{\Sce}, \Psi^* J)$ by dilation. Remark that $\Psi_r^* J_r \to J_0$ in $C^{0}$ as $r \to 0$; moreover, assuming $\alpha_0$ small enough, the fact that $D \Psi$ is $C^0$-close to $\mathbb{I}$ yields $|\nabla \,(\Psi^* J)| \leq 2 |\nabla J|$.

\medskip

We are looking, in normal coordinates, at a sequence $T_{0,r_n}:=(\la_{r_n})_* T = \left( \frac{x}{r_n} \right)_* T \to T_\infty$. Restricting to $\Psi_{r_n}(\tilde{\Sce})$, i.e. $T_{0,r_n} \res \Psi_{r_n}(\tilde{\Sce})$, we pull back the problem on $\tilde{\Sce}$ and look at 

\be
\label{eq:newcurrents}
\left(\Psi_{r_n}^{-1}\right)_* \left( T_{0,r_n} \res \Psi_{r_n}(\tilde{\Sce}) \right).
\ee

Recalling that $\Psi_r \to Id$ in $C^1$ and that $T_{0,r_n}$ have equibounded masses we have, for any two-form $\beta$,

\be
\label{eq:eqiuvconverg}
\left(\Psi_{r_n}^{-1}\right)_* \left( T_{0,r_n} \res \Psi_{r_n}(\tilde{\Sce}) \right)(\beta) - \left(Id\right)_*\left( T_{0,r_n} \res \Psi_{r_n}(\tilde{\Sce}) \right) (\beta) \to 0.
\ee

This follows with a proof as in step 2 of lemma \ref{lem:A2}, by writing the difference $({\Psi_{r_n}}^{-1})^* \beta - Id^* \beta$ in terms of the coefficients of $\beta$. Then from (\ref{eq:newcurrents}) and (\ref{eq:eqiuvconverg}) we get

\be
\label{eq:eqiuvconverg2}
\lim_{n \to \infty} \left(\Psi_{r_n}^{-1}\right)_* \left( T_{0,r_n} \res \Psi_{r_n}(\tilde{\Sce}) \right) = \left( \lim_{n \to \infty} \left({\la}_{r_n}\right)_* T \right) \res \Sce.
\ee

In the last equality we are identifying the space with the tilda and the one without. On the other hand by (\ref{eq:Psi}) we have 

\be
\label{eq:equivdilate}
\begin{split}
\left(\Psi_{r_n}^{-1}\right)_* \left( T_{0,r_n} \res \Psi_{r_n}(\tilde{\Sce}) \right) = \left[\left(\Psi_{r_n}^{-1}\right)_* {\la_{r_n}}_* \left(T \res  \Psi(\tilde{\Sce})\right)\right] \res \tilde{\Sce} \\
= \left[\left({\lt}_{r_n}\right)_* \left(\Psi^{-1}\right)_* \left(T \res  \Psi(\tilde{\Sce})\right)\right] \res \tilde{\Sce}.
\end{split}
\ee

\medskip

What we have obtained with (\ref{eq:eqiuvconverg2}) and (\ref{eq:equivdilate}) is that, using $\Psi$, we can just pull back $T$ to $\tilde{\Sce}$ endowed with $\Psi^*J$, $\Psi^*g$ and $\Psi^*\om$ and dilate with $\lt_r$ and observe what happens in the limit. All the possible limits of this family are cones, namely all the possible tangent cones to the original $T$, restricted to the sector $\Sce$.

All the information we need about the family $T_{0,r} \res {\Sc}_0$ can be obtained in this way. So we are substituting the blow up in normal coordinates with a different one, that behaves well with respect to $J$ and has the same asymptotic behaviour, i.e. it yields the same cones.

\medskip

Remark that lemmas \ref{lem:A1} and \ref{lem:A2} tell us that $\left(\Psi^{-1}\right)_* \left(T \res  \Psi(\tilde{\Sce})\right)$ is still positive-$(1,1)$ and the densities are preserved. Observe that we cannot use the monotonicity formula for $\left(\Psi^{-1}\right)_* \left(T \res  \Psi(\tilde{\Sce})\right)$ at the origin, since $0$ is now a boundary point. However the monotonicity for $T$ reflects into the following

\begin{lem}
\label{lem:newmonotone}
For the current $\left(\Psi^{-1}\right)_* \left(T \res  \Psi(\tilde{\Sce})\right)$, with respect to the flat metric in $\Sce$, it holds

\be
\label{eq:newmonotone}
\frac{M\left(\left(\Psi^{-1}\right)_* \left(T \res  \Psi(\tilde{\Sce})\right) \res (B_r \cap \tilde{\Sce})\right)}{\pi r^{2}} \leq K
\ee
with a constant $K$ independent of $r$. 
\end{lem}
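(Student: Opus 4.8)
The plan is to transfer the desired bound back to the original current $T$, for which the almost‑monotonicity of Proposition \ref{Prop:monotonicity} is available at the interior point $0$, and thereby circumvent the fact that $0$ is a boundary point of the sector $\tilde{\Sce}$ (which is exactly what prevents one from applying monotonicity directly to $S:=(\Psi^{-1})_*(T \res \Psi(\tilde{\Sce}))$). First I would localize: using the elementary identity $(f_* C)\res f(A)=f_*(C\res A)$ with $f=\Psi^{-1}$ and $A=\Psi(B_r\cap\tilde{\Sce})$, one obtains
\[
S \res (B_r \cap \tilde{\Sce}) = (\Psi^{-1})_*\bigl(T \res \Psi(B_r \cap \tilde{\Sce})\bigr).
\]
By construction $\Psi$ is, when $\tilde{\Sce}$ is equipped with the pulled‑back metric $\Psi^* g$, a Riemannian isometry onto $(\Psi(\tilde{\Sce}),g)$; hence pushforward by $\Psi^{-1}$ preserves $\Psi^*g$‑mass exactly,
\[
M_{\Psi^* g}\bigl(S \res (B_r \cap \tilde{\Sce})\bigr) = M_{g}\bigl(T \res \Psi(B_r \cap \tilde{\Sce})\bigr).
\]
Since by property (iii) of $\Psi$ the map is $C^{2,\nu}$‑close to the identity and $g$ is $C^{2,\nu}$‑close to the flat metric $g_0$, the metric $\Psi^*g$ is close to $g_0$, so the flat mass and the $\Psi^*g$‑mass of $S$ differ by a factor $C_1$ near $1$, giving $M_{\mathrm{flat}}(S \res (B_r \cap \tilde{\Sce})) \leq C_1\, M_{g}(T \res \Psi(B_r \cap \tilde{\Sce}))$.

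Next I would control the image region. Because $\Psi(0)=0$ and, again by property (iii), $\|D\Psi-\mathbb{I}\|_{C^0}\leq C_0$, the map $\Psi$ is Lipschitz with constant $\leq 1+C_0$, whence $|\Psi(x)|\leq(1+C_0)|x|$ and therefore $\Psi(B_r\cap\tilde{\Sce})\subset B_{(1+C_0)r}(0)$. This yields
\[
M_{g}\bigl(T \res \Psi(B_r \cap \tilde{\Sce})\bigr) \leq M_{g}\bigl(T \res B_{(1+C_0)r}(0)\bigr).
\]
At this point I invoke Proposition \ref{Prop:monotonicity} for $T$ at the interior point $x_0=0$: the mass ratio $\rho\mapsto M(T\res B_\rho(0))/(\pi\rho^2)$ equals $R(\rho)+o_\rho(1)$ with $R$ non‑increasing as $\rho\downarrow 0$ and $|o_\rho(1)|\leq CL\rho$, so it is bounded by a single constant $K_0:=R(r_0)+CLr_0$ uniformly for all $\rho\leq r_0$. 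Taking $\rho=(1+C_0)r$ and chaining the estimates gives, for every $r\leq r_0/(1+C_0)$,
\[
\frac{M_{\mathrm{flat}}\bigl(S \res (B_r \cap \tilde{\Sce})\bigr)}{\pi r^2} \leq C_1 (1+C_0)^2\, \frac{M(T \res B_{(1+C_0)r}(0))}{\pi ((1+C_0)r)^2} \leq C_1 (1+C_0)^2 K_0 =: K .
\]
For the remaining range of radii, bounded away from $0$ (recall $\tilde{\Sce}\subset B_{1+\eps}$), the ratio is trivially bounded by $M(S)/\bigl(\pi (r_0/(1+C_0))^2\bigr)$ using only the finiteness of the total mass of $S$; enlarging $K$ if necessary then gives the stated bound for all $r$.

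The argument consists entirely of comparison estimates, so I do not anticipate a genuine analytic obstacle. The one point requiring care is precisely the reason the lemma is phrased this way: monotonicity cannot be read off directly for $S$ because the vertex $0$ is a boundary point of $\tilde{\Sce}$, and the whole trick is to push the computation back to $T$, where $0$ is interior. The only real bookkeeping is keeping the three relevant metrics — the flat metric, $g$, and $\Psi^*g$ — and the diffeomorphism distortion of $\Psi$ under simultaneous control; this is guaranteed because we work in normal coordinates with $J$ and $g$ close to the standard structures and with $\Psi$ close to the identity, so every comparison constant above is finite and close to $1$.
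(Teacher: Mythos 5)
Your proof is correct and follows essentially the same route as the paper's: localize the restriction, transfer the mass back to $T$ through $\Psi$, enclose $\Psi(B_r\cap\tilde{\Sce})$ in a slightly larger ball centered at the origin, and invoke the almost-monotonicity of Proposition \ref{Prop:monotonicity} for $T$ at the interior point $0$. The only cosmetic difference is that you get the middle identity $M_{\Psi^*g}(S\res\cdot)=M_g(T\res\Psi(\cdot))$ from the isometry property of the pushforward, while the paper routes it through the positiveness identity $M_{\Psi^* g}(C \res \cdot)=(C \res \cdot)(\Psi^* \om)$, and the paper uses the sharper pointwise bounds $|\Psi-Id|\leq c r^{1+\nu}$, $|D\Psi-\mathbb{I}|\leq c r^{\nu}$ where your cruder uniform constants already suffice for equiboundedness.
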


\begin{proof}[\textbf{proof of lemma \ref{lem:newmonotone}}]
We denote, only for this proof, by $C$ the current $\left(\Psi^{-1}\right)_* \left(T \res  \Psi(\tilde{\Sce})\right)$. Since $|D \Psi-\mathbb{I}| \leq c r^{\nu}$ (where $\mathbb{I} = D(Id)$ is the identity matrix) and $g=g_0 + O(r^2)$ (where $g_0$ is the flat metric), we also get $\Psi^*g = g_0 + O(r^{\nu})$.

Comparing the masses of $C$ with respect to $g_0$ and $\Psi^* g$ we get $$M_{g_0} \left(C  \res (B_r \cap  \tilde{\Sce})\right) \leq (1 + |O(r^{\nu})|)M_{\Psi^* g}\left(C  \res (B_r \cap  \tilde{\Sce})\right),$$

where $B_r$ is always euclidean. Now recall that, by the positiveness of the currents,
 
$$M_{\Psi^* g}\left(C \res (B_r \cap \tilde{\Sce})\right)=\left(C \res (B_r \cap \tilde{\Sce})\right)(\Psi^* \om)= M_g\left( T \res \Psi(B_r \cap \tilde{\Sce} )\right).$$

The condition $|\Psi-Id|\leq c r^{1+\nu}$ implies that $\Psi(B_r \cap \tilde{\Sce} ) \subset B_{r+c r^{1+\nu}} \cap \Sce$. In $\Sce$ coordinates are normal, so, putting all together:

\be
\nonumber
\frac{M_{g_0} \left(C  \res (B_r \cap  \tilde{\Sce})\right)}{r^2} \leq   (1 + |O(r^{\nu})|)\; \frac{(r+c r^{1+\nu})^2 }{r^2} \; \frac{M_{g}\left(T \res B_{r+c r^{1+\nu}}\right)}{(r+c r^{1+\nu})^2 },
\ee

which is equibounded in $r$ by almost monotonicity (proposition \ref{Prop:monotonicity}).

\end{proof}

\medskip

So we restate our problem in the following terms, where we drop the tildas and the pull-backs (resp. push-forwards) via $\Psi$ (resp. $\Psi^{-1}$), since there will be no more confusion arising. 

\medskip

\textbf{New setting: pseudo holomorphic polar coordinates.} 

Endow ${\Sc}_{\eps} \subset B^{2n+2}_2(0)$ with a smooth almost complex structure $J$ such that, denoting by $J_0$ the standard complex structure,

\begin{itemize}
 \item there is $Q>0$ such that for any $0<r<1$, $|J-J_0|_{C^0({\Sc}_{\eps} \cap B_r)} < Q \cdot r$ and $|\nabla J| <Q$ (and $Q$ can be assumed to be small);
 
 \item the $2$-planes $D^X$ (for $X \in {\Vc}_{\eps}$) foliating the sector $\Sce$ are $J$-pseudo holomorphic.
\end{itemize}

Let $\om$ and $g$ be respectively a compatible non-degenerate two-form and the associated Riemannian metric such that $\|\om-\om_0\|_{C^0({\Sc}_{\eps} \cap B_r)} < Q \cdot r$ and $\|g-g_0\|_{C^0({\Sc}_{\eps} \cap B_r)} < Q \cdot r$, where $\om_0$ and $g_0$ are the standard ones. 

Let $T$ be a positive-$(1,1)$ normal cycle in ${\Sc}_{\eps}$.

\medskip

Study the asymptotic behaviour as $r \to 0$ of the family $\left({\la}_{r}\right)_* T$, where $\la_r = \frac{Id}{r}$ in euclidean coordinates. More precisely we can restate theorem \ref{thm:main2} as follows; in theorem \ref{thm:main2} we can assume, up to a rotation and passing to a subsequence, that $y_m=\frac{x_m}{|x_m|} \to (1,0,...,0)$.

\begin{Prop}
 \label{Prop:restated}
With the assumptions just made on $J$ and $T$, assume that there exists a sequence $x_m \to 0$, $0 \neq x_m \in \Sce$, of points all having densities satisfying $\liminf_{m \to \infty} \nu(x_m) \geq \ka$ for a fixed $\ka >0$ and such that $y_m:=\frac{x_m}{|x_m|} \to (1,0,...,0)$. Then any limit
 $$\lim_{r_n \to 0} \left({\la}_{r_n}\right)_* T $$
 is a positive-$(1,1)$ cone (for $J_0$) of the form $\ka \llbracket D^{[1,0,...,0]} \rrbracket + \tilde{T}$, where $\tilde{T}$ is also a positive-$(1,1)$ cone for $J_0$ ($\tilde{T}$ possibly depending on $\{r_n\}$).
\end{Prop}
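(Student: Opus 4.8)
The plan is to reduce the statement to the detection of a single atom of the measure representing the limit cone, and then to find that atom after passing to the algebraic (proper-transform) blow up of Section~\ref{blowup}. Recall first from Section~\ref{result} that any limit $T_\infty = \lim_n (\la_{r_n})_* T$ is a positive-$(1,1)$ cone for $J_0$, so by \eqref{eq:11link} there is a positive Radon measure $\tau$ on $\CP^n$ with $T_\infty = \int_{\CP^n}\llbracket D^X\rrbracket\, d\tau(X)$. Writing $X_0 := [1,0,\dots,0]$, the central direction of the sector $\Sce$, it suffices to prove $\tau(\{X_0\}) \geq \ka$: once this holds, $\tau - \ka\delta_{X_0}$ is again a positive measure and $\tilde T := \int_{\CP^n} \llbracket D^X\rrbracket\, d(\tau-\ka\delta_{X_0})$ is a positive-$(1,1)$ cone for $J_0$ with $T_\infty = \ka\llbracket D^{X_0}\rrbracket + \tilde T$, which is exactly the claim.

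Next I pass to the proper transform. In the chart where $z_0\neq 0$ the blown-up sector carries coordinates $(w,\zeta)$ with $\Phi(w,\zeta)=(\zeta,\zeta w)$; the foliating disks $D^X$ become the fibres $L^X=\{w=X\}$, the dilation $\la_r:z\mapsto z/r$ becomes the fibre dilation $\mu_r:(w,\zeta)\mapsto(w,\zeta/r)$, and the point $y_0:=H(y)=X_0$ of the exceptional divisor is fixed by every $\mu_r$. By the lemmas \ref{lem:A1} and \ref{lem:A2} of the appendix, $(\Phi^{-1})_* T$ is a positive-$(1,1)$ normal cycle for the pushed-forward structure and densities are preserved away from the divisor, and $(\Phi^{-1})_* (\la_{r_n})_* T = (\mu_{r_n})_* (\Phi^{-1})_* T$. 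For each fixed $n$ the hypothesis yields points $x_m/r_n$ ($m\to\infty$) of $T_{0,r_n}$ of density $\geq\ka$; writing $x_m=(x_{m,0},x_m')\in\C\times\C^n$, their transforms $\Phi^{-1}(x_m/r_n)=(x_m'/x_{m,0},\,x_{m,0}/r_n)$ tend to $y_0$ as $m\to\infty$, since $x_m/|x_m|\to(1,0,\dots,0)$ forces $x_m'/x_{m,0}\to 0$ while $x_{m,0}/r_n\to 0$. As the density is upper semicontinuous (a consequence of Proposition~\ref{Prop:monotonicity}, valid on the blown-up manifold) we obtain $\nu_{(\Phi^{-1})_* T_{0,r_n}}(y_0)\geq\ka$ for every $n$.

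Now let $n\to\infty$. Granting the commutation $(\Phi^{-1})_* T_{0,r_n} \to (\Phi^{-1})_* T_\infty =: S_\infty$ in the weak sense, the continuity of mass under weak convergence of positive cycles — the computation displayed in Section~\ref{result}, which rewrites $M(\cdot\res B_\rho(y_0)) = (\cdot)(\chi_{B_\rho(y_0)}\,\om)$ and uses almost monotonicity to upgrade a pointwise density bound to the limit — gives $\nu_{S_\infty}(y_0)\geq\ka$. Finally $S_\infty=\int_{\CP^n}\llbracket L^X\rrbracket\,d\tau(X)$ is a superposition of fibres, and a direct mass-ratio computation in balls $B_\rho(y_0)$ finishes the argument: the fibres meeting $B_\rho(y_0)$ are those with $|X-X_0|=O(\rho)$, each contributing a disk of area $\pi(\rho^2-|w|^2)+o(\rho^2)$ (the metric being Euclidean in $(w,\zeta)$ to leading order near $y_0$), so that $\nu_{S_\infty}(y_0)=\lim_{\rho\to 0}\int_{|w|<\rho}(1-|w|^2/\rho^2)\,d\tau=\tau(\{X_0\})$. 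Hence $\tau(\{X_0\})\geq\ka$, completing the reduction.

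I expect the genuine obstacle to be the commutation $(\Phi^{-1})_* T_{0,r_n} \to (\Phi^{-1})_* T_\infty$. The map $\Phi^{-1}$ degenerates precisely along the exceptional divisor, which is exactly where the mass we want to capture concentrates ($y_0$ lies on the divisor), so one cannot simply invoke weak continuity of push-forward under a smooth proper map. This must be handled as in Section~\ref{blowup}, controlling the behaviour of the transforms near the divisor by means of the uniform mass-ratio bound of Lemma~\ref{lem:newmonotone} and verifying that no mass escapes to, or concentrates spuriously on, the divisor in the limit; the equibounded masses of the $(\la_{r_n})_* T$ together with the $C^1$-closeness estimates for the pseudo-holomorphic foliation are what make this control possible. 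The remaining ingredients — positivity, finiteness of mass and vanishing boundary of the transform, preservation of densities, upper semicontinuity, and the mass-ratio computation for a superposition of fibres — are either supplied by the cited results or are routine.
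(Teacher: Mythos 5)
Your proposal follows the same route as the paper's proof: reduce to detecting an atom of $\tau$ at $X_0=[1,0,\dots,0]$, pass to the proper transform, propagate the density bound $\geq\ka$ to the point $p_0=([1,0,\dots,0],0)$ of the exceptional divisor for each fixed $n$ by upper semicontinuity, pass that bound to the limit using positivity, and read off the atom from the fibre structure of the transformed cone. Your closing mass-ratio computation identifying $\nu_{S_\infty}(y_0)$ with $\tau(\{X_0\})$ is a correct (and slightly more explicit) justification of the paper's final step.

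The one substantive step you leave open --- the commutation $(\Phi^{-1})_* T_{0,r_n} \rightharpoonup (\Phi^{-1})_* T_\infty$ --- is indeed the crux, but it is not supplied by Section \ref{blowup}: it is Lemma \ref{lem:pinfty}, proved inside Section \ref{proof} as part of the proof of this very proposition, and it needs an argument beyond the finiteness and cycle lemmas. Concretely, after extracting a weak limit $P_\infty$ of the transforms $P_n$ (equibounded masses, Lemma \ref{lem:finitemasscycle}), one must show $M(P_\infty \res \Acr)=o_\rho(1)$ uniformly in $n$, i.e.\ that no mass of the limit concentrates near the exceptional divisor; the paper does this by splitting $\vt_0=\mathcal{E}^*\vt_{\CP^n}+\mathcal{E}^*\vt_{\C^{n+1}}$ and estimating the two actions on $P_n\res\Acr$ separately (the vertical part via the comass scaling under $\La_\rho$, the horizontal part via $M(P\res{\Ac}_{r_n\rho})\leq M(P\res\Acr)\to 0$). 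Only then does $P_\infty=\lim_{\rho\to 0}P_\infty\res(\Ac\setminus\Acr)$ reduce the identification of $P_\infty$ with ${\pt}_*T_\infty$ to the region where $\P^{-1}$ is a diffeomorphism. A second, smaller point you gloss over: in passing $\nu_{P_n}(p_0)\geq\ka$ to the limit, the $P_n$ are positive for \emph{different} semicalibrations $\vt_n$, so one needs both the uniform Lipschitz bound on the $\vt_n$ (making the constants in Proposition \ref{Prop:monotonicityapp} uniform in $n$) and $|\vt_n-\vt_0|\to 0$ to replace $\vt_n$ by $\vt_0$, as in (\ref{eq:densitypassestolimit}). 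Neither issue invalidates your outline, but both must be supplied for a complete proof.
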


\begin{oss}
\label{oss:mononewsett}
As observed in (\ref{eq:newmonotone}), our new $T$ satisfies, with respect to the flat metric, $\frac{M(T \res (B_r \cap {\Sc}_{\eps}))}{r^{2}} \leq K$ for a constant independent of $r$.
\end{oss}

\begin{oss}
\label{oss:shrinkeps}

For the proof of proposition \ref{Prop:restated} is suffices to understand the asymptotic behaviour of $T$ in $\Sc_0$, which we will just denote by $\Sc$. So at some point we will look at $T \res \Sc$ and this current has boundary on $\p {\Sc}$. Indeed the operation $\res$ is defined in such a way that it yields a current with support in $\Sc$, but we still view it as a current in the open set $\Sce$.

On the other hand we may wish to look at $T \res \Sc$ as a current in the open set $\Sc$, which means that we only test it against forms compactly supported in $\Sc$: it this case $T$ is boundaryless in $\Sc$. It will be specified when we wish to do so.

\end{oss}

\medskip

\section{Algebraic blow up}
\label{blowup}

The classical symplectic (or algebraic) blow up was recalled in the introduction (maps $\P$ and ${\P}^{-1}$ in figure \ref{fig:blowup}). More details can be found in \cite{MS}. $\widetilde{\C}^{n+1}$ is a complex line bundle over $\CP^n$, that we view as an embedded sumbanifold in $\CP^n \times \C^{n+1}$. We use standard coordinates on $\CP^n \times \C^{n+1}$ coming from the product, so we have $2n$ ``horizontal variables'' and $2n+2$ ``vertical variables''. The standard symplectic form on $\CP^n \times \C^{n+1}$ is given by the two form $\vt_{\CP^n} + \vt_{\C^{n+1}}$, where $\vt_{\CP^n}$ is the standard symplectic form\footnote{In the chart $\C^n \equiv \{z_0 \neq 0\}$ of $\CP^n$, the form $\vt_{\CP^n}$ is expressed, using coordinates $Z=(Z_1, ..., Z_n)$, by $\p \overline{\p} f$, where $f=\frac{i}{2}\log(1 + |Z|^2)$ (see \cite{MS}). The metric $g_{\text{FS}}$ associated to $\vt_{\CP^n}$ and to the standard complex structure is called Fubini-Study metric and it fulfils $\frac{1}{4}\mathbb{I} \leq g_{\text{FS}} \leq 4 \mathbb{I}$ when we compare it to the flat metric on the domain $\{|Z|<1\}$.} on $\CP^n$ extended to $\CP^n \times \C^{n+1}$ (so independent of the ``vertical variables'') and $\vt_{\C^{n+1}}$ is the symplectic two-form on $\C^{n+1}$, extended to $\CP^n \times \C^{n+1}$ (so independent of the ``horizontal variables''). To $\vt_{\CP^n} + \vt_{\C^{n+1}}$ we associate the standard metric, i.e. the product of the Fubini-Study metric on $\CP^n$ and the flat metric on $\C^{n+1}$. The associated complex structure is denoted $I_0$.

\medskip

As a complex submanifold, $\widetilde{\C}^{n+1}$ inherits from the ambient space a complex structure, still denoted $I_0$, and the restricted symplectic form $\vt_0:= \mathcal{E}^*\left(\vt_{\CP^n} + \vt_{\C^{n+1}}\right)$, where $\mathcal{E}$ is the embedding in $\CP^n \times \C^{n+1}$. Let further $\G_0$ denote the ambient metric restricted to $\widetilde{\C}^{n+1}$: $\G_0$ is then compatible with $I_0$ and $\vt_0$, i.e. ${\vt}_0(\cdot, \cdot):= \G_0(\cdot, -I_0 \cdot)$. 





We now turn to the almost complex situation and will adapt the previous construction by building on the results of section \ref{polar}. 

\medskip

\textbf{Implementation in the almost complex setting}. With the notation 

$$\Sce=\{(z_0, z_1, ... z_n) \in B_{1+\eps}^{2n+2} \subset \C^{n+1}: |(z_1, ..., z_n)| < (1+\eps)|z_0|\}$$

for $\eps \geq 0$ as in (\ref{eq:sce}), let ${\Sc}={\Sc}_0$. Also set $\Vce:=\left\{\sum_{j=1}^n \frac{|z_j|^2}{|z_0|^2} <1+\eps \right\} \subset \CP^n$ and $\Vc=\Vc_0$.

\medskip

The inverse image $\Phi^{-1} ({\Sce})$ is given by $\{(\ell, z) \in \Vce \times \C^{n+1}: 0<|z| < 1+\eps\}$. The union $\Phi^{-1} ({\Sce}) \cup \left(\Vce \times \{0\}\right)$ will be denoted by $\Ace$.

\medskip

$\Ace$ is an open set in $\widetilde{\C}^{n+1}$ but we will endow it with other almost complex structures, different from $I_0$, so $\Ace$ should be thought of just as an oriented manifold and the structure on it will be specified in every instance. 

\medskip

We will keep using the same letters $\Phi^{-1}$ and $\P$ to denote the restricted maps 

\be
\label{eq:mapsrestr}
\begin{split}
\P^{-1}: \; {\Sc}  \rightarrow {\Ac} \;\;\;\; \\
\P: \; {\Ac} \rightarrow {\Sc} \cup \{0\} 
\end{split}
\ee

also when we look at these spaces just as oriented manifolds (not complex ones). We will make use of the notation

$$\Scr :=\Sc  \cap B_\rho^{2n+2}  \text{ and } \Acr := \Phi^{-1} ({\Scr}) \cup \left(\Vc \times \{0\}\right).$$

It should be kept in mind that $\Phi^{-1}$ and $\Phi$ in (\ref{eq:mapsrestr}) can be extended a bit beyond their boundaries, namely to $\Sce$ and to $\Ace:= \Phi^{-1} (\Sce) \cup \left(\Vce \times \{0\}\right)$.

\medskip

\begin{figure}[h]
\centering
 \includegraphics[width=9cm]{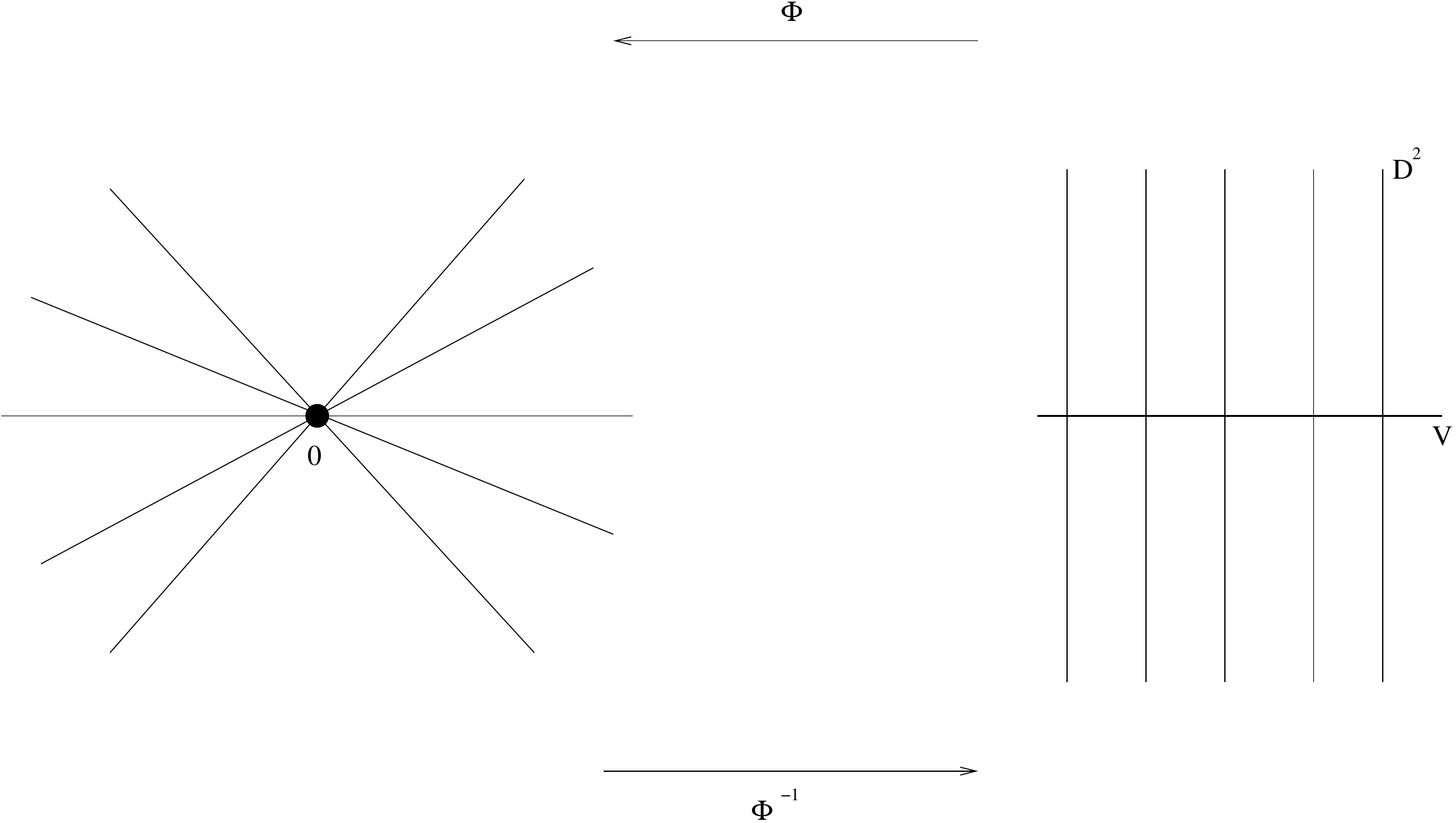}
\caption{Blowing up the origin. The maps $\P^{-1}: {\Sc}  \rightarrow {\Ac}$ and $\P: {\Ac} \rightarrow {\Sc} \cup \{0\} $.}
 \label{fig:blowup}
\end{figure}

Define on ${\Ac} \,\setminus (\CP^n \times \{0\})$:

\begin{itemize}
 \item the almost complex structure $I := {\P}^* J$, i.e. $I(\cdot):= {\pt}_* J {\P}_*(\cdot)$,
 
 \item the metric $\G(\cdot, \cdot):= \G_0(\cdot, \cdot) + \G_0(I \cdot, I \cdot)$,
 
 \item the non-degenerate two-form $\vt(\cdot, \cdot):= \G(I \cdot, \cdot) = \G_0(I \cdot, \cdot) - \G_0(\cdot, I \cdot)$.
\end{itemize}

The triple $(I, \G, \vt)$ is smooth on ${\Ac} \setminus (\CP^n \times \{0\})$ and makes it an almost complex manifold. We do not know yet, however, the behaviour of $(I, \G, \vt)$ as we approach $\Vc \times \{0\}$.

\begin{lem}[\textbf{the new structure is Lipschitz}]
\label{lem:lipcontrolI}
The almost complex structure $I$ fulfils 
$$|I-I_0|(\cdot) \leq c \text{dist}_{\G_0}(\; \cdot \; ,\;\CP^n \times \{0\}),$$
for $c= C \cdot Q$, where $C$ is a dimensional constant and $Q$ is as in the hypothesis on $J$ (paragraph ``new setting'', just before proposition \ref{Prop:restated}). $I$ can thus be extended continuously across across $\CP^n \times \{0\}$. 

Analogously we have $|\G-\G_0|(\cdot) \leq c \text{dist}_{\G_0}(\; \cdot \; ,\;\CP^n \times \{0\})$ and $|\vt-\vt_0|(\cdot) \leq c \text{dist}_{\G_0}(\; \cdot \; ,\;\CP^n \times \{0\})$. The triple $(I, \G, \vt)$ can be extended across $\CP^n \times \{0\}$ to the whole of ${\Ac}$ by setting it to be the standard $(I_0, \G_0, \vt_0)$ on $\CP^n \times \{0\}$. The structures $I, \G, \vt$ so defined are globally Lipschitz-continuos on $\Ac$, with Lipschitz constant $L + C \cdot Q$, where $L>0$ is an upper bound for the Lipschitz constants of $I_0$, $\G_0$ and $\vt_0$ (with respect to euclidean coordinates on $\Vc \times D^2$).
\end{lem}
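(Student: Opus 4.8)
The plan is to reduce the three estimates to a single pointwise bound on $I-I_0$ and to extract everything else algebraically. The crucial structural input is that the projection $\P$ is holomorphic for $(I_0,J_0)$, so that $I_0=\P^{*}J_0$ on $\Ac\setminus(\CP^n\times\{0\})$; combined with the definition $I=\P^{*}J$ this gives, at each point $p$ off the exceptional divisor,
\[ I-I_0=(d\P_p)^{-1}\,\big(J_{\P(p)}-(J_0)_{\P(p)}\big)\,(d\P_p)=:(d\P_p)^{-1}A(\P(p))\,(d\P_p). \]
Here $A:=J-J_0$ satisfies $|A(z)|\le Q|z|$ by the hypothesis on $J$, and since $\G_0$ is the product of the Fubini--Study and the flat metric one has $\mathrm{dist}_{\G_0}(p,\CP^n\times\{0\})=|\P(p)|$. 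Thus the whole problem is to bound the conjugation of the small endomorphism $A$ by the differential of the blow-up map.

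First I would describe how $d\P$ degenerates near the divisor. Over $\ell=[1:w]\in\Vc$, writing a point of $\Ac$ as $(\ell,z)$ with $z=t\,(1,w)$ (so $|t|\simeq|z|=\mathrm{dist}_{\G_0}$), the vertical (fibre) direction $\partial_t$ is sent by $d\P$ to $(1,w)$, which spans the tangent plane $T_zD^X$ to the foliating disk through $z$; this is an isomorphism of order one. The horizontal directions $\partial_{w_j}$ are instead sent to $t\,e_{z_j}$, i.e. into the normal space $N$ to $D^X$, contracted by $|t|$. Hence $(d\P_p)^{-1}$ is of order one on $T_zD^X$ but expands $N$ by $|t|^{-1}$, and this degeneration is the main obstacle: the naive estimate $\|(d\P)^{-1}\|\,|A|\,\|d\P\|=O(|t|^{-1})\cdot O(|t|)\cdot O(1)=O(1)$ does not decay.

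The mechanism that removes the obstacle is the hypothesis that the disks $D^X=\C\cdot(1,w)$ are $J$-pseudo holomorphic. Being at once a $J_0$-complex line and a $J$-holomorphic curve, and with $J$ close to $J_0$, the plane $T_zD^X$ carries the same complex structure for $J$ and $J_0$; hence $J_z=J_0$ on $T_zD^X$, i.e. $A(z)$ annihilates $T_zD^X$, and in particular $A(t(1,w))(1,w)\equiv0$. Since $T_zD^X$ is exactly the image of the order-one (vertical) part of $d\P$, the dangerous $|t|^{-1}$-direction of $(d\P)^{-1}$ is killed by $A$: the vertical column of $I-I_0$ vanishes identically, while on a horizontal input the vector is first contracted to size $|t|$, then hit by $A$ of size $Q|t|$, and finally expanded by at most $|t|^{-1}$, leaving $|(I-I_0)(p)|\le C\,Q\,|t|=C\,Q\,\mathrm{dist}_{\G_0}(p,\CP^n\times\{0\})$. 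The uniform transversality of $T_zD^X$ and $N$, guaranteed by $|w|<1+\eps$, keeps $C$ dimensional. This is the asserted bound for $I$, and it shows at once that $I$ extends continuously by $I_0$ across the divisor.

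Finally, the analogous bounds for $\G$ and $\vt$ follow by inserting $|I-I_0|\le c\,\mathrm{dist}$ into the defining expressions $\G=\G_0(\cdot,\cdot)+\G_0(I\cdot,I\cdot)$ and $\vt(\cdot,\cdot)=\G_0(I\cdot,\cdot)-\G_0(\cdot,I\cdot)$, using that $\G_0$ is bounded and Lipschitz; the triple then extends continuously across $\CP^n\times\{0\}$ to the structure induced by $I_0$. For the global Lipschitz constant I would differentiate the conjugation formula once: the only terms that could blow up come from the $|t|^{-1}$ and $|t|^{-2}$ growth of $(d\P)^{-1}$ and of its derivative, but in every such term this growth is absorbed by the $t$-factors carried by the horizontal part of $d\P$ and by the exact identity $A(t(1,w))(1,w)\equiv0$, whose derivatives along the fibre and along $\Vc$ still vanish. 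Together with $|\nabla J|\le Q$ this yields $|\nabla I|\le L+C\,Q$ off the divisor, and hence, $I$ being continuous across it, the global Lipschitz bound $L+C\,Q$; the same computation applies to $\G$ and $\vt$.
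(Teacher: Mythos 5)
Your overall strategy is the same as the paper's: write $I-I_0=(d\P)^{-1}(J-J_0)\,d\P$, record that $d\P$ is an isometry along the fibres (onto the tangent planes of the foliating disks) while contracting the horizontal directions by a factor comparable to $\mathrm{dist}_{\G_0}(\cdot,\CP^n\times\{0\})$, and use the pseudoholomorphicity of the disks $D^X$ to tame the $|t|^{-1}$ expansion of $(d\P)^{-1}$ in the normal directions. However, the key step is justified by a false claim. You assert that since $T_zD^X$ is both a $J_0$-complex line and ($J$ being close to $J_0$) a $J$-holomorphic plane, one must have $J_z=J_0$ on $T_zD^X$, so that $A=J-J_0$ \emph{annihilates} $T_zD^X$ and ``the vertical column of $I-I_0$ vanishes identically.'' This is not true: a real $2$-plane carries a two-parameter family of complex structures, and an invariant plane pins down $J|_{T_zD^X}$ only once you also fix the compatible metric (it is then rotation by $90^\circ$). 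Since $J$ is compatible with $g$ and $J_0$ with $g_0$, and $g-g_0=O(Q|z|)$ on the plane, one only gets $J|_{T_zD^X}-J_0|_{T_zD^X}=O(Q|z|)$, not $0$.

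What pseudoholomorphicity of the disks actually gives — and what the paper uses, via the expansion $J(W)=\sigma W+(1+\tilde\sigma)J_0(W)$ with $|\sigma|,|\tilde\sigma|\le Q|q|$ — is that $A(z)$ \emph{preserves} $T_zD^X$: the image $A(W)$ has no component in the normal directions that $(d\P)^{-1}$ expands by $|t|^{-1}$, and on $T_zD^X$ itself $(d\P)^{-1}$ is length-preserving, so the vertical column of $I-I_0$ is $O(Q|z|)$ rather than zero. This weaker statement suffices for the pointwise bound, so your proof is repairable, but as written the justification of the vertical estimate is invalid; moreover your concluding Lipschitz argument leans explicitly on the ``exact identity $A(t(1,w))(1,w)\equiv 0$'' and on the vanishing of its derivatives, and that identity does not hold. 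To fix both the pointwise and the Lipschitz estimates you should replace ``$A$ annihilates $T_zD^X$'' by ``$A$ maps $T_zD^X$ to itself with norm (and Lipschitz constant) controlled by $Q$,'' which is exactly the decomposition the paper carries out in its equations for $I(H_1)$ and $I(V)$.
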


\begin{proof}[\textbf{proof of lemma \ref{lem:lipcontrolI}}]
Recall that $\P$ is holomorphic for the standard structures $J_0$ and $I_0$. With respect to the flat metric on $\Sc$, we can choose an orthonormal basis at any point $q \neq 0$ made as follows: $$\{L_1, J_0(L_1), L_2, J_0(L_2), ... , L_n, J_0(L_n), W, J_0(W) \},$$ where $W$ and $J_0(W)$ span the $J_0$-complex $2$-plane through the origin and $q$. The map ${\pt}_*$ is holomorphic and sends this basis to one at $\pt(q) \in \Ac$, sending $W$ and $I_0(W)$ to a pair of vectors spanning the fiber through $\pt(q)$. On the vertical vectors ${\pt}_*$ is length preserving, while for the others $|{\pt}_* L_j| =|{\pt}_* J_0(L_j)|=\frac{\sqrt{1+|q|^2}}{|q|}$, as one can compute from the explicit expression of the Fubini-Study metric.

\medskip

Reversing this construction we can choose two basis, respectively at $p$ and $q=\P(p)$, as follows:

$$\{H_1, I_0(H_1), ... , H_n, I_0(H_n), V, I_0(V) \}$$ made of $\G_0$-unit vectors with scalar products w.r.t $\G_0$ bounded by $\frac{|q|}{\sqrt{1+|q|^2}}$, and

$$\left\{\frac{\sqrt{1+|q|^2}}{|q|} K_1, \frac{\sqrt{1+|q|^2}}{|q|} J_0(K_1),  ... , \frac{\sqrt{1+|q|^2}}{|q|} K_n, \frac{\sqrt{1+|q|^2}}{|q|} J_0(K_n), W, J_0(W) \right\} ,$$  

orthonormal at $q=\P(p)$, such that:

\begin{description}
 \item[(i)] $K_j:= {\P}_*H_j$ and $W:= {\P}_*V$;
 
 \item[(ii)] $V$ and $I_0(V)$ are vertical, i.e. they span the vertical fiber through $p$: by \textbf{(i)}, $W$ and $J_0(W)$ span the $J_0$-complex $2$-plane through the origin and $q$.
\end{description}

By the assumption that $J$ is close to $J_0$ in $B_1$ we can write the action of $J$ on $K_1$ as 

\be
\label{eq:actionH1}
\begin{split}
J(K_1) = (1+\la) J_0(K_1) + \sum_{j=1}^n \mu_j K_j + \sum_{j=2}^n \ti{\mu}_j J_0(K_j) + \\
+\frac{|q|}{\sqrt{1+|q|^2}}\sigma W_1 +\frac{|q|}{\sqrt{1+|q|^2}} \ti{\sigma}J_0(W_1).
\end{split}
\ee

Here $\la$, $\mu_j$, $\ti{\mu}_j$, $\sigma$ and $\ti{\sigma}$ are functions on $\Sc$ depending on $J-J_0$, evaluated at $q$, so their moduli are bounded by $|J-J_0|(q)< Q |q|$. 

\medskip

Let us write the action of $I$ on $H_1$ explicitly: by definition of $I$, using (\ref{eq:actionH1}),

$$I(H_1):= {\pt}_* J {\P}_*(H_1) = {\pt}_* J (K_1) =$$

\be
\label{eq:horizlipest}
\begin{split}
=((1+\la) \circ \P) I_0(H_1) + \sum_{j=1}^n (\mu_j \circ \P) H_j + \sum_{j=2}^n (\ti{\mu}_j \circ \P) I_0(K_j) + \\
+\frac{|q|}{\sqrt{1+|q|^2}}(\sigma \circ \P) V_1 +\frac{|q|}{\sqrt{1+|q|^2}} (\ti{\sigma} \circ \P) I_0(V_1).
\end{split}
\ee

\medskip

Similar expressions are obtained for the actions on $H_j$ and $I_0(H_j)$ for all $j$. Now

$$J(W) = \sigma W + (1+\ti{\sigma})J_0(W),$$

since the $2$-plane spanned by $W$ and $J_0(W)$ is $J$-pseudo holomorphic by hypothesis.

Here $\sigma$ and $\ti{\sigma}$ are functions on $\Sc$ depending on $J-J_0$, evaluated at $q$, and their moduli are bounded by $|J-J_0|< Q |q|$. 

So the action of $I$ on $V$ is explicitly given by

$$I(V):= {\pt}_* J {\P}_*(V) = {\pt}_* J (W) =$$ $$=(\sigma \circ \P) {\pt}_* (W) + ((1+\ti{\sigma}) \circ \P) {\pt}_* J_0(W)$$ 

\be
\label{eq:vertlipest}
=(\sigma \circ \P) V + ((1+\ti{\sigma}) \circ \P) I_0(V).
\ee

So we have, from (\ref{eq:horizlipest}) and (\ref{eq:vertlipest}) that there exists $c = C \cdot Q$ (for some dimensional constant $C$) such that $(I-I_0)$ at the point $p=\pt(q)$ has norm $\leq c |q| = c \;\text{dist}_{\G_0}(\; \cdot \; ,\;\CP^n \times \{0\})$.

The analogous estimates on $\G$ and $\vt$ follow by their definition. So we can extend the triple $(I, \G, \vt)$ across $\CP^n \times \{0\}$ in a Lipschitz continuous fashion.

From (\ref{eq:horizlipest}) and (\ref{eq:vertlipest}) we also get that $I$ is, globally in ${\Ac}$, a Lipschitz continuous perturbation of $I_0$, and the same goes for $\G$ and $\vt$: indeed the Lipschitz constants of $\la$, $\mu_j$, $\ti{\mu}_j$, $\sigma$ and $\ti{\sigma}$ are controlled by $C \cdot Q$, for some dimensional constant $C$ (which can be taken the same as the $C$ we had above, by choosing the larger of the two). 

\end{proof}

\begin{oss}
The importance of working with coordinates adapted to $J$, as chosen in section \ref{polar}, relies in the fact that this allows to obtain the Lip\-schitz extension across $\CP^n \times \{0\}$, which could fail on the vertical vectors if coordinates were taken arbitrary. 
\end{oss}

\medskip

\medskip

\medskip

The aim is now to \textbf{translate our problem} in the new space $({\Ac}, I, g, \vt)$. The trouble is that the push-forward of $T$ via $\P^{-1}$ can only be done away from the origin and the map $\P^{-1}$ degenerates as we get closer to $0$. 

\medskip

For any $\rho>0$ we can take the \textit{proper transform} of $T \res (\Sc \setminus \Scr )$ by pushing forward via $\P^{-1}$, since this is a diffeomorphism away from the origin:

$$P_{\rho}:=  {\pt}_* \left(T \res (\Sc \setminus \Scr )\right).$$

What happens when $\rho \to 0$ ? The following two lemmas yield the answers.

\begin{lem}
\label{lem:finitemasscycle}
The current $P:=\lim_{\rho \to 0}  P_\rho = \lim_{\rho \to 0} {\pt}_* \left(T \res (\Sc \setminus \Scr)\right)$ is well-defined as the limit of currents of equibounded mass to be a current of finite mass in $\Ac$. 

The mass of $P$, both with respect to $\G$ and to $\G_0$, is bounded by a dimensional constant $C$ times the mass of $T$.
\end{lem}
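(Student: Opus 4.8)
The plan is to use that the proper transform $\pt$ is holomorphic (from $J_0$ to $I_0$), so that the $\G$-mass of each $P_\rho$ can be computed by testing against the calibration, and then to isolate the only dangerous contribution — the horizontal dilation near the exceptional divisor — and kill it with the \emph{defect term} of the monotonicity formula. By the appendix lemma that pseudo-holomorphic maps preserve the $(1,1)$-condition, each $P_\rho:=\pt_*\big(T\res(\Sc\setminus\Scr)\big)$ is $\vt$-positive on $\Ac\setminus(\CP^n\times\{0\})$, so its mass is realized by the calibration: $M_\G(P_\rho)=P_\rho(\vt)=\int_{\Sc\setminus B_\rho}\langle\pt^*\vt,\vec T\rangle\,d\|T\|$. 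Everything thus reduces to a pointwise estimate of the $2$-form $\eta:=\pt^*\vt$ against the $J$-holomorphic planes carried by $\vec T$.

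For the pointwise estimate, write a unit $J_0$-holomorphic $2$-vector at $x\neq0$ as $\xi=v\wedge J_0 v$ with $v$ a $g_0$-unit complex vector. Since $\pt$ is holomorphic, $\pt_*(J_0 v)=I_0\,\pt_* v$, hence $\langle\eta,\xi\rangle=\vt(\pt_* v,I_0\pt_* v)$, which by Lemma \ref{lem:lipcontrolI} ($\vt\approx\vt_0$, $\G\approx\G_0$ near the divisor) equals $|\pt_* v|^2_{\G_0}$ up to a factor $1+O(|x|)$. Decomposing $v=v_{\mathrm{rad}}+v_\perp$ into its component along the radial complex line $\C\cdot x$ and the $\C$-orthogonal complement, the behaviour of $\pt$ recorded in the proof of Lemma \ref{lem:lipcontrolI} ($\pt$ is an isometry on the radial/vertical directions and dilates the $\C$-orthogonal ones by $\tfrac{\sqrt{1+|x|^2}}{|x|}$, the Fubini-Study metric being bounded on $\Vc$) gives $|\pt_* v|^2_{\G_0}=|v_{\mathrm{rad}}|^2+|v_\perp|^2\tfrac{1+|x|^2}{|x|^2}$. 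Absorbing the $O(Q|x|)$ discrepancy between $J$- and $J_0$-holomorphicity (which perturbs $\pt_*(Jv)$ by a \emph{bounded} vector, since $|\pt_*|\cdot Q|x|\le CQ$), one obtains $\|T\|$-a.e. $\langle\eta,\vec T\rangle\le C\big(1+\tfrac{|v_\perp|^2}{|x|^2}+\tfrac{Q}{|x|}\big)$, with $|v_\perp|$ now the effective $\C$-orthogonal deviation of $\vec T$; convexity of the comass pairing passes this from simple vectors to $\vec T$.

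Integrating, $\int 1\,d\|T\|=M(T)$, and $\int\tfrac{Q}{|x|}\,d\|T\|\le C\,M(T)$ by a one-line dyadic summation using the bounded ratio of Remark \ref{oss:mononewsett} ($\sum_k 2^{k}\|T\|(B_{2^{-k}})\lesssim\sum_k 2^{k}K2^{-2k}<\infty$). The genuinely dangerous term is $\int_{\Sc}\tfrac{|v_\perp|^2}{|x|^2}\,d\|T\|$, for which the same dyadic bound only yields $\sum_k K=\infty$: the bounded mass ratio alone is \emph{not} enough, and this is the crux. The key observation is that $|v_\perp|^2/|x|^2$ is exactly the monotonicity defect integrand, because for a holomorphic plane the squared length of the component of the unit radial field $x/|x|$ normal to $\vec T$ equals $|v_\perp|^2$. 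Hence the remainder form of the monotonicity formula (Proposition \ref{Prop:monotonicityapp}) bounds $\int_{B_\rho\setminus B_\sigma}\tfrac{|v_\perp|^2}{|x|^2}\,d\|T\|$ by the variation of the mass ratio plus the $\lambda$-minimality error; letting $\sigma\to0$ gives $\le \tfrac{M(T\res B_\rho)}{\rho^2}+CL\rho\le C\,M(T)$, uniformly in $\rho$.

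Combining the three contributions yields $M_\G(P_\rho)\le C\,M(T)$ with $C$ dimensional, uniformly in $\rho$. Since the $P_\rho$ are nested and agree on $\Sc\setminus B_\rho$, Federer-Fleming compactness produces the limit $P=\lim_{\rho\to0}P_\rho$ with $M_\G(P)\le C\,M(T)$, and the $\G_0$-mass bound follows since $|\G-\G_0|\le c\,\mathrm{dist}(\cdot,\CP^n\times\{0\})$ (Lemma \ref{lem:lipcontrolI}) makes the two masses comparable. The main obstacle is precisely the $\tfrac{|v_\perp|^2}{|x|^2}$ term: the pointwise Jacobian of $\pt$ blows up like $|x|^{-1}$, so no naive pushforward estimate survives, and one must recognize this term as the monotonicity defect and invoke the full monotonicity \emph{with remainder}, not merely the bounded ratio. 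A secondary technical point is that the origin is a boundary point of the sector, so the defect bound is best read off the genuine boundaryless cycle upstairs (it is preserved, up to constants, under the $C^1$-small pull-back by $\Psi$ of Section \ref{polar}).
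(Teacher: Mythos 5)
Your argument is essentially correct but follows a genuinely different route from the paper. The paper never estimates the Jacobian of $\pt$ pointwise against $\vec T$; instead it splits $\vt=\mathcal{E}^*\vt_{\CP^n}+\mathcal{E}^*\vt_{\C^{n+1}}+(\vt-\vt_0)$ and handles the dangerous horizontal term by \emph{integration by parts}: in the affine chart ${\pt}^*\mathcal{E}^*\vt_{\CP^n}=d\eta$ with $\|\eta\|^*\lesssim 1/|z|$, so since $T$ is a cycle the pairing collapses to slice terms on $\{|z|=1\}$ and $\{|z|=\rho_k\}$ (the lateral slice killing $\eta$), and a preliminary Fubini-type selection of radii $\rho_k$ with $M(\langle T,|z|=\rho_k\rangle)\le 4K\rho_k$ makes the inner slice term bounded. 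You instead identify the divergent part of $\langle{\pt}^*\vt,\vec T\rangle$, namely $|v_\perp|^2/|x|^2$, with the remainder integrand of the monotonicity formula (\ref{eq:monotonicity}) and control it by the full formula with defect. Your identification is correct: for $\Pi=\mathrm{span}(v,J_0v)$ one indeed has $|\hat r^\perp|^2=1-|v_{\mathrm{rad}}|^2=|v_\perp|^2$, and the remaining terms ($Q/|x|$, the $J$-versus-$J_0$ correction, $\vt-\vt_0$) are all integrable by the dyadic argument from remark \ref{oss:mononewsett}. What each approach buys: yours gives the uniform mass bound for \emph{all} $\rho$ at once and dispenses with the choice of good slicing radii; the paper's only ever invokes the bounded mass ratio (not the remainder term) and so sidesteps entirely the question of whether the defect estimate survives the change of coordinates of section \ref{polar}.

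That last question is the one place where your write-up is too quick. In the polar coordinates the origin is a boundary point of the sector, so Proposition \ref{Prop:monotonicityapp} cannot be applied there directly; as you say, the defect bound must be read off the original cycle in the full ball (which is fine, the integrand being nonnegative) and then transported by $\Psi^{-1}$. But ``$C^1$-small'' is not a sufficient hypothesis for that transport: if one only knew $|D\Psi-\mathbb{I}|\le\epsilon$ uniformly, the radial field would be perturbed by $O(\epsilon)$ and the error picked up in the defect integrand would be $\int\epsilon^2|x|^{-2}\,d\|T\|$, whose dyadic sum $\sum_j\epsilon^2 2^{2j}K2^{-2j}$ diverges — exactly the divergence you are trying to avoid. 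What saves the argument is the H\"older decay at the origin, $|D\Psi-\mathbb{I}|\le c\,r^{\nu}$ and $|\Psi-\mathrm{Id}|\le c\,r^{1+\nu}$ (the estimates used in the proof of lemma \ref{lem:newmonotone}), which turn the error into $\int O(|x|^{2\nu})|x|^{-2}\,d\|T\|\le K\sum_j 2^{-2j\nu}<\infty$. With that point made explicit, your proof is complete.
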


\begin{lem}
\label{lem:normalposcycle}
The current $P:=\lim_{\rho \to 0}  P_\rho = \lim_{\rho \to 0} {\pt}_* \left(T \res (\Sc \setminus \Scr)\right)$ is a $\vt$-positive normal cycle in the open set ${\Ac}$ ($\vt$ is a semi-calibration with respect to $\G$).
\end{lem}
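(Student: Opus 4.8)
The plan is to verify the three assertions contained in the statement in turn: that $\vt$ is a semi-calibration for $\G$, that $P$ is $\vt$-positive, and that $P$ is a cycle in the open set $\Ac$. The first and second are essentially formal; the difficulty is entirely concentrated in the vanishing of the boundary across the exceptional divisor $\CP^n\times\{0\}$.

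\textbf{Semi-calibration.} This is immediate from the construction of $(\G,I,\vt)$ in Lemma \ref{lem:lipcontrolI}. Since $\G(\cdot,\cdot):=\G_0(\cdot,\cdot)+\G_0(I\cdot,I\cdot)$ and $I^2=-\mathrm{Id}$, one checks at once that $\G(I\cdot,I\cdot)=\G(\cdot,\cdot)$, i.e. $I$ is a $\G$-orthogonal endomorphism. Exactly the computation \eqref{eq:JandOm} then gives that the comass $\|\vt\|^*=1$ for $\G$ and that a unit simple $2$-vector is $\vt$-calibrated if and only if it is $I$-invariant. Hence ``$\vt$-positive'' and ``positive-$(1,1)$ for $I$'' mean the same thing.

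\textbf{Positivity.} Away from $\CP^n\times\{0\}$ the map $\PI$ is a diffeomorphism and is $(J,I)$-pseudo holomorphic by the very definition $I=\P^*J$, so Lemma \ref{lem:A1} yields that each $P_\rho=\pt_*\big(T\res(\Sc\setminus\Scr)\big)$ is $\vt$-positive. As $\rho\downarrow0$ the currents $P_\rho$ increase (the region $\Sc\setminus\Scr$ grows), have equibounded mass by Lemma \ref{lem:finitemasscycle}, and converge weakly to $P$. Positivity passes to this limit by the mass$=$action characterization: fixing $\Ac'\Subset\Ac$ one has $M(P\res\Ac')\le\liminf_\rho M(P_\rho\res\Ac')=\liminf_\rho P_\rho(\chi_{\Ac'}\vt)=P(\chi_{\Ac'}\vt)\le M(P\res\Ac')$, where the first inequality is lower semicontinuity of mass, the middle equalities use the positivity of $P_\rho$ together with weak convergence tested against the bounded Lipschitz form $\vt$ (legitimate because the masses are equibounded, cf. the extension of the action to bounded Borel forms), and the last inequality is $\|\vt\|^*=1$. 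Equality throughout forces $\langle\vt,\vec P\rangle=1$ $\|P\|$-a.e. on $\Ac'$, and $\Ac'$ being arbitrary, $P$ is $\vt$-positive.

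\textbf{Boundaryless.} Since $\partial$ is weakly continuous, $\partial P=\lim_{\rho\to0}\partial P_\rho$. Working in the open set $\Sc$, where $T$ is boundaryless (Remark \ref{oss:shrinkeps}), restriction produces only the inner slice, $\partial\big(T\res(\Sc\setminus\Scr)\big)=-\langle T,|x|,\rho\rangle$, so that $\partial P_\rho=-S_\rho$ with $S_\rho:=\pt_*\langle T,|x|,\rho\rangle$; testing against forms compactly supported in $\Ac$ the lateral boundary on $\partial\Ac$ is never seen. As $S_\rho$ is supported in $\{|z|=\rho\}$, which shrinks onto $\CP^n\times\{0\}$, we get $\mathrm{spt}(\partial P)\subseteq\CP^n\times\{0\}$ and $\partial P=-\lim_\rho S_\rho$. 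It therefore suffices to exhibit one sequence $\rho_n\downarrow0$ with $M(S_{\rho_n})\to0$. This is the main obstacle and the only place where monotonicity is genuinely used: the map $\pt$ degenerates along $\CP^n\times\{0\}$, dilating the horizontal directions by the factor $\tfrac{\sqrt{1+|q|^2}}{|q|}\sim\tfrac1{|q|}$ (computed in Lemma \ref{lem:lipcontrolI}) while preserving the vertical directions, which are exactly the radial $J_0$-complex lines. Thus only the \emph{non-radial} part of $\vec T$ is amplified, and a priori the $1/\rho$ expansion of the horizontal component of the slice tangent could balance the $O(\rho)$ slice mass, keeping $M(S_\rho)$ bounded away from $0$. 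The way out is the monotonicity identity of Proposition \ref{Prop:monotonicityapp}, whose non-negative remainder controls precisely the failure of $\vec T$ to contain the radial direction, giving (up to the harmless Lipschitz error $o_r(1)$ of Proposition \ref{Prop:monotonicity}, integrable in $\rho$) a bound of the schematic form $\int_0^{r_0}\tfrac{|x^\perp|^2}{|x|^{4}}\,d\|T\|<\infty$, where $x^\perp$ is the component of the position vector normal to $\vec T$ and vanishes exactly when $\vec T$ is radial. Since the integrand is non-negative and integrable, one selects $\rho_n\downarrow0$ realizing simultaneously $M(\langle T,|x|,\rho_n\rangle)\lesssim\rho_n$ (coarea and Remark \ref{oss:mononewsett}) and $\tfrac1{\rho_n^{4}}\int_{\{|x|=\rho_n\}}|x^\perp|^2\,d\|T\|\to0$. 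Along such radii the slices become asymptotically tangent to the vertical (radial) directions, so a Cauchy–Schwarz estimate of $M(S_{\rho_n})$ against the slice mass gives $M(S_{\rho_n})\to0$. Hence $\partial P=-\lim_n S_{\rho_n}=0$, completing the proof; the bookkeeping of exponents in this last estimate is routine.
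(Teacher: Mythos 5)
Your decomposition into semi-calibration, positivity, and boundarylessness matches the paper's, and the first two parts are handled essentially as the paper does (invoking Lemma \ref{lem:A1} for the $\vt$-positivity of each $P_\rho$ and passing it to the limit). For the crucial third part, however, you take a genuinely different route. The paper does \emph{not} try to show that the boundary masses tend to zero; indeed $M(\p P_\rho)$ need only be bounded, since the slice $\langle T,|z|=\rho\rangle$ has mass $O(\rho)$ while $\pt$ stretches horizontal directions by $\sim 1/\rho$. Instead the paper rescales, writing $(\La_{\rho_k})_*(\p P_{\rho_k})=-\pt_*\langle T_{0,\rho_k},|z|=1\rangle\rightharpoonup -\pt_*\langle T_\infty,|z|=1\rangle$ for a tangent cone $T_\infty$, and tests $\p P_{\rho_k}$ against a fixed $1$-form split as $\alpha_h+\alpha_v$: the pullback $(\La_{\rho_k}^{-1})^*\alpha_v$ has comass $O(\rho_k)$ against equibounded boundary masses, while the horizontal contribution dies because the slice of the $(1,1)$-cone $T_\infty$ is tangent to the Hopf fibers, so its proper transform is purely vertical and annihilates forms in the $dt_j$'s. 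Thus the paper uses monotonicity only qualitatively (through the existence and structure of tangent cones), whereas you use it quantitatively, via the remainder $\int |x-x_0|^{-2}\sum_k\la_k|\vec T_k\wedge\tfrac{\p}{\p r}|^2\,d\|T\|<\infty$ of Proposition \ref{Prop:monotonicityapp}, to select radii along which $M(S_{\rho_n})\to 0$ — which does suffice, since the whole family $\p P_\rho$ converges weakly to $\p P$. Your route, if completed, proves the stronger statement $\liminf_{\rho\to0}M(\pt_*\langle T,|z|=\rho\rangle)=0$; the paper's is shorter and never needs a pointwise analysis of the slices. Be warned, though, that the step you call routine bookkeeping is where the real work lies: for a normal (non-rectifiable) cycle you must relate the non-Hopf component of the slice tangent to the density $\sum_k\la_k|\vec T_k\wedge\tfrac{\p}{\p r}|^2$ through the coarea/slicing inequality (the exponents do close, $\rho^{-1}\cdot o(\rho)^{1/2}\cdot O(\rho)^{1/2}=o(1)$ by Cauchy--Schwarz), and you must check that the two generic-radius selections — slice mass $\lesssim\rho$ and remainder $o(\rho)$ — can be realized simultaneously, e.g.\ by a logarithmic-measure argument on dyadic annuli; as written, these points are asserted rather than proved.
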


\medskip

A little notation before the proofs. For any $\rho$ consider the dilation $\lambda_\rho(\cdot):=\frac{\cdot}{\rho}$, sending $B_\rho$ to $B_1$, and the map 

\be
\label{eq:dilblow}
\Lambda_\rho:\Acr \to \Ac, \;\; \Lambda_\rho:=\Phi^{-1} \circ \lambda_\rho \circ \Phi ,
\ee

which in the coordinates of $\CP^n \times \C^{n+1}$ (the ambient space in which $\Ac$ is embedded) reads $\Lambda_\rho (\ell, z) = \left(\ell, \frac{z}{\rho}\right)$.

\begin{proof}[\textbf{proof of lemma \ref{lem:finitemasscycle}}]
The currents $T$ and $T_{0,r}:=(\la_r)_* (T \res B_\rho)$ are defined in $\Sce$ and by remark \ref{oss:mononewsett}, i.e. by the monotonicity formula, we have a uniform bound on the masses: $M(T_{0,r}) \leq K$.

\medskip

The map $\P^{-1}$ is pseudo holomorphic with respect to $J$ and $I$ by definition of $I$; thus each $P_\rho = {\pt}_* \left(T \res (\Sc \setminus \Scr)\right)$ is $\vt$-positive by construction (see lemma \ref{lem:A1}), so $M(P_\rho)=P_\rho(\vt)$, where the mass is computed here with respect to $\G$, the metric defined before lemma \ref{lem:lipcontrolI}. The currents $P_{\rho}$ and $P_{\rho'}$, for $\rho > \rho'$, coincide on $\Ac \setminus \overline{\Acr}$, therefore in order to study the limit as $\rho \to 0$, it is enough to look at a chosen sequence $\rho_k \to 0$ and prove that $P_{\rho_k}$ have equibounded masses and thus converge to a limit $P$, which must then be the limit of the whole family $P_\rho$.

\medskip

\textit{1st step: choice of the sequence}. Denote by $\langle T, |z|=r\rangle$ the slice of a current $T$ with the sphere $\p B_r$. Choose $\rho_k$ so to ensure

\begin{itemize}
 \item (i) $T_{\rho_k} \rightharpoonup T_\infty$ in ${\Sc}$ for a certain cone $T_\infty$,
 
 \item (ii) $M(\langle T_{\rho_k}, |z|=1 \rangle)$ are equibounded by $4K$.
\end{itemize}

This is achieved as follows: take a sequence $\rho_k'$ fulfilling (i); remark \ref{oss:mononewsett} tells us that $M(T_{\rho_k'})$ are equibounded by a constant $K$ independent of $k$. By slicing theory (see \cite{G})

$$\int_{\frac{1}{2}}^1 M(\langle T_{\rho_k'}, |z|=r \rangle) dr \leq M(T_{\rho_k'} \res (B_1 \setminus B_{\frac{1}{2}}))\leq K,$$

thus at least half of the slices $\langle T_{\rho_k'}, |z|=r \rangle_{r \in [\frac{1}{2},1]}$ have masses $\leq 2K$. For every $k$ we can choose $\frac{1}{2} \leq s_k \leq 1$ such that all the slices $\langle T_{\rho_k'}, |z|=s_k \rangle$ exist and have mass $\leq 2K$. Then with $\rho_k = s_k \rho_k'$ it holds 

$$M(\langle T_{\rho_k}, |z|=1 \rangle) = M\left( \left({\la}_{\frac{\rho_k}{\rho_k'}}\right)_* \left\langle {T_{\rho_k'}}, |z|=s_k={\frac{\rho_k}{\rho_k'}} \right\rangle \right) \leq 2 \cdot 2K$$

and since $\frac{\rho_k'}{2} \leq \rho_k \leq \rho_k'$ the sequence  $T_{\rho_k}$ also converges to the same $T_\infty$.

\medskip

Since $\langle T_{\rho_k}, |z|=1 \rangle = \left(\la_{\rho_k}\right)_* \langle T, |z|=\rho_k \rangle $, condition (ii) also reads

\be
\label{eq:stimaslice}
M \left( \langle T, |z|=\rho_k \rangle \right) \leq 4K \rho_k.
\ee

\medskip

\textit{2nd step: uniform bound on the masses}. We use in $\Ac$ standard coordinates inherited from $\CP^n \times \C^{n+1}$, i.e. we have $2n$ horizontal variables (from $\CP^n$) and $2n+2$ vertical variables. 

The standard symplectic form $\vt_0$ is $\mathcal{E}^*(\vt_{\CP^n} + \vt_{\C^{n+1}})$, as in the beginning of section \ref{blowup}. We want to estimate $M(P_\rho)=P_\rho(\vt) = P_\rho(\vt_0)+P_\rho(\vt-\vt_0)$. 

\medskip 

Let us first deal with $P_\rho(\mathcal{E}^*\vt_{\CP^n}) = T_{0,\rho} \res (\Sc \setminus \Scr)({\pt}^*\mathcal{E}^*\vt_{\CP^n})$. It is convenient here to keep in mind that $\vt_0$ is actually defined on $\Ace$ and consider ${\pt}^*\mathcal{E}^*\vt_{\CP^n}$ as a form on $\Sce$, since $\P^{-1}$ also extends to $\Sce$. The map $\mathcal{E} \circ \P^{-1}: \Sce \to \Ace$ has the coordinate expression $(z_0, ... z_n) \to \left( (\frac{z_1}{z_0}, ..., \frac{z_n}{z_0}), (z_0, ... z_n)\right) \in \Vce \times \C^{n+1}$, using the chart $z_0 \neq 0$ on $\Vce \subset \CP^n$.

Using the explicit expression of $\vt_{\CP^n}$ (see \cite{MS} or the beginning of this section) we can write in the domain $\Sce$, where $z_0 \neq 0$, 

$${\pt}^*\mathcal{E}^*(\vt_{\CP^n})=\p \overline{\p} \log \left( 1 + \sum_{j=1}^n \frac{|z_j|^2}{|z_0|^2} \right).$$ 

We are neglecting a factor $\frac{i}{2}$, which would not play any significant role in this proof. In particular ${\pt}^*\mathcal{E}^*(\vt_{\CP^n}) = d \eta$, where 

$$\eta=\frac{1}{2}\left( \overline{\p} \log \left( 1 + \sum_{j=1}^n \frac{|z_j|^2}{|z_0|^2} \right) - \p \log \left( 1 + \sum_{j=1}^n \frac{|z_j|^2}{|z_0|^2} \right)\right).$$

We thus have 

$$P_\rho(\mathcal{E}^* \vt_{\CP^n}) =  \left(T \res (\Sc \setminus \Scr)\right)({\pt}^* \mathcal{E}^* \vt_{\CP^n}) = \left(T \res (\Sc \setminus \Scr)\right)( d \eta) =$$

$$=\p\left[T \res (\Sc \setminus \Scr)\right] \left(\eta \right).$$

The boundary of $T \res (\Sc \setminus \Scr)$ is made of three portions: two live in the spheres $\p B_1$ and $\p B_\rho$ and the third one is given by the slice of $(T \res \Sce) \res (B_1 \setminus B_\rho)$ with the hypersurface $\sum_{j=1}^n \frac{|z_j|^2}{|z_0|^2} = 1$. There is no loss of generality in assuming that these slices exists. 

The explicit form of $\eta$ then implies that the latter portion of boundary, i.e. the slice of $T$ with the hypersurface $\sum_{j=1}^n \frac{|z_j|^2}{|z_0|^2} = 1$,  has zero action on $ \eta$. We can thus write

$$P_\rho(\mathcal{E}^* \vt_{\CP^n}) =  \langle T \res \Sc, |z|=1 \rangle \left(\eta \right) -  \langle T \res \Sc, |z|=\rho \rangle   \left( \eta \right).$$

Now observe the comass of $ \eta $. The comasses are equivalent up to a universal constant $C$ to the maximum modulus of the coefficients of the form. We can explicitly compute $\|\eta\|^* \leq \frac{C}{\rho}$, where $\rho$ is the distance from the origin.

Now we focus on the sequence $\rho_k$ chosen in step 1, for which (ii) and (\ref{eq:stimaslice}) hold. We thus get, independently of $\rho_k$,

\be
\label{eq:firstbound}
|P_{\rho_k}(\mathcal{E}^* \vt_{\CP^n})| \leq 4 K\, C .
\ee

The estimate 

\be 
\label{eq:secondbound}
|P_{\rho}(\mathcal{E}^*\vt_{\C^{n+1}})| = |T_{0,\rho} \res (\Sc \setminus \Scr)({\pt}^*\mathcal{E}^*\vt_{\C^{n+1}})|\leq  K
\ee

follows easily since $\P^{-1}$ is lenght-preserving in the vertical coordinates and thus $(\mathcal{E} \circ \P^{-1})^*$ preserves the comass of $\vt_{\C^{n+1}}$.

\medskip

Now let us consider $|P_\rho(\vt-\vt_0)|$. Thanks to the Lipschitz control from lemma \ref{lem:lipcontrolI}, i.e. $|\vt-\vt_0|(\cdot) \leq   c \text{dist}_{\G_0}(\cdot,\CP^n \times \{0\})$, the two-form ${\pt}^* (\vt-\vt_0)$ in $\Sc$ has comass $\leq \frac{c \cdot C}{\rho} \leq \frac{C}{\rho}$, where $\rho$ is the distance from the origin and $C$ is a dimensional constant ($c$ can be assumed to be smaller that $1$). 

We can then decompose $\Sc = \cup_{j=0}^\infty A_j$, where $A_j = \Sc \cap \left(B_{\frac{1}{2^j}} \setminus B_{\frac{1}{2^{j+1}}}\right)$. As observed in remark \ref{oss:mononewsett} it holds $M(T \res A_j) \leq K \frac{1}{2^{2j}}$. On the other hand the comass of ${\pt}^* (\vt-\vt_0)$ in $A_j$ is $\leq  C \, 2^{j+1}$.

Therefore summing on all $j$'s we can bound 

\be
\label{eq:thirdbound}
|P_\rho(\vt-\vt_0)|= \left|(T \res \Sc)\left({\pt}^* (\vt-\vt_0)\right)\right|  \leq $$ $$\leq K \, C \,\sum_{j=0}^\infty  2^{j+1} \frac{1}{2^{2j}} = K \, C \, \sum_{j=0}^\infty 2^{1-j} = 4K\,C,
\ee

so $|P_\rho(\vt-\vt_0)|$ is also equibounded independently of $\rho$.

\medskip

Putting (\ref{eq:firstbound}), (\ref{eq:secondbound}) and (\ref{eq:thirdbound}) together, we obtain that $M(P_{\rho_k})$ are uniformly bounded by $K$ times a dimensional constant $C$. By compactness there exists a current $P$ in ${\Ac}$ such that $P_{\rho} \rightharpoonup P$. 

So far we were taking the mass with respect to $\G$. Since $\G$ is $c$-close to $\G_0$, for a small constant $c$, an analogous bound holds, up to doubling the constant $C$, for the mass of $P$ computed with respect to $\G_0$. This observation is needed later in section \ref{proof}. 

\end{proof}

\medskip

Our next aim is to prove that the current $P$ just obtained is in fact a cycle in the open set $\Ac$. A priori this is not clear, for in the limit $\rho \to 0$ some boundary could be created on $\CP^n \times \{0\}$.

\medskip

\begin{proof}[\textbf{proof of lemma \ref{lem:normalposcycle}}]

\textit{Step 1}. We are viewing $P$ as a current in the open set $\Ac$ in the manifold $\widetilde{\C}^{n+1}$, so the same should be done for the currents $P_{\rho}:= {\pt}_*\left(T \res (\Sc \setminus \Scr)\right)$. Given a sequence $\rho_k \to 0$, we want to observe the boundaries $\p P_{\rho_k}$. Up to a subsequence we may assume that $\rho_k$ is such that $T_{0, \rho_k} \rightharpoonup T_\infty$ for a certain cone. Then the boundaries $\p P_{\rho_k}$ satisfy, as  $k \to \infty$, by the definition (\ref{eq:dilblow}) of $\La_{\rho_k}$:

\be
\label{eq:boundariesdilblow}
(\Lambda_{\rho_k})_*(\p P_{\rho_k}) =  -{\pt}_* \langle T_{0, \rho_k}, |z|=1 \rangle \rightharpoonup -{\pt}_* \langle T_\infty, |z|=1 \rangle .
\ee

Recall that we are viewing $P_{\rho_k}$ as currents in the open set $\Ac$, so also $T \res (\Sc \setminus \Scr)$ should be thought of as a current in the open set $\Sc$: this is why the only boundary comes from the slice of $T$ with $|z|=\rho_k$.

\medskip

Moreover if the sequence is chosen (and we will do so) as in step 1 of lemma \ref{lem:finitemasscycle}, then $(\Lambda_{\rho_k})_*(\p P_{\rho_k})$ have equibounded masses, since so do $\p (T_{0, \rho_k})$ and $\P^{-1}$ is a diffeomorphism on $\p B_1$.

The current $T_\infty$ has a special form: it is a $(1,1)$-cone, so the $1$-current $\langle T_\infty, |z|=1 \rangle$ has an associated vector field that is always tangent to the Hopf fibers\footnote{The Hopf fibration is defined by the projection $H: S^{2n+1} \subset \C^{n+1} \rightarrow \CP^n$, $H(z_0, ..., z_n) = [z_0, ..., z_n]$. The Hopf fibers $H^{-1}(p)$ for $p \in \CP^n$ are maximal circles in $S^{2n+1}$, namely the links of complex lines of $\C^{n+1}$ with the sphere.} of $S^{2n+1}$. 

\medskip

\textit{Step 2}. We want to show that $P$ is a cycle in $\Ac$, i.e. that $\p P_{\rho_k} \to 0$ as $n \to \infty$. The boundary in the limit could possibly appear on $\CP^n \times \{0\}$ and we can exclude that as follows.

Let $\alpha$ be a $1$-form of comass one with compact support in $\Ac$ and let us prove that $\p P_{\rho_k}(\alpha) \to 0$. Since $\Ac$ is a submanifold in $\CP^n \times \C^{n+1}$, we can extend $\alpha$ to be a form in $\CP^n \times \C^{n+1}$. Let us write, using horizontal coordinates $\{t_j\}_{j=1}^{2n}$ on $\CP^n$ and vertical ones $\{s_j\}_{j=1}^{2n+2}$ for $\C^{n+1}$, $\alpha=\alpha_h + \alpha_v$, where $\alpha_h$ is a form in the $d t_j$'s, $\alpha_v$ in the $d s_j$'s. Rewrite, viewing $P_{\rho_n}$ as currents in $\CP^n \times \C^{n+1}$,

$$\p P_{\rho_k}(\alpha) = \left[(\Lambda_{\rho_k})_*(\p P_{\rho_k})\right]\left( \Lambda_{\rho_k}^{-1})^* \alpha \right).$$

The map $\Lambda_{\rho_k}^{-1}$ is expressed in our coordinates by $(t_1, ..., t_{2n}, s_1, ...s_n) \to (t_1, ..., t_{2n}, \rho_k s_1, ... \rho_k s_{2n+2})$, therefore 

$$(\Lambda_{\rho_k}^{-1})^* \alpha = \alpha^n_h + \alpha^n_v,$$

where the decomposition is as above and with $\|\alpha^n_h\|^* \approx \|\alpha_h\|^*$ and $\|\alpha^n_v\|^* \lesssim \rho_k \|\alpha_v\|^*$. The signs $\approx$ and $\lesssim$ mean respectively equality and inequality of the comasses up to a dimensional constant, so independently of the index $n$ of the sequence. 

\medskip

As $k \to \infty$ it holds $\alpha^k_h \to \alpha^\infty_h$ in some $C^\ell$-norm, where $\|\alpha^\infty_h\|^* \lesssim 1$ and $\alpha^\infty_h$ is a form in the $d t_j$'s. More precisely $\alpha^\infty_h$ coincides with the restriction of $\alpha_h$ to $\CP^n \times \{0\}$, extended to $\CP^n \times \C^{n+1}$ independently of the $s_j$ variables. We can write

$$\left|\left[(\Lambda_{\rho_k})_*(\p P_{\rho_k})\right]( \alpha^k_h)\right| \leq \left|\left[(\Lambda_{\rho_k})_*(\p P_{\rho_k})\right]( \alpha^k_h - \alpha^\infty_h)\right| + \left|\left[(\Lambda_{\rho_k})_*(\p P_{\rho_k})\right]( \alpha^\infty_h)\right| $$

and both terms on the r.h.s. go to $0$. The first, since $M((\Lambda_{\rho_k})_*(\p P_{\rho_k}))$ are equibounded and $|\alpha^k_h - \alpha^\infty_h|  \to 0$; the second because we can use (\ref{eq:boundariesdilblow}) and ${\pt}_* \p (T_\infty)$ has zero action on a form that only has the $d t_j$'s components, as remarked in step 1.

\medskip

Moreover 

$$\left|\left[(\Lambda_{\rho_k})_*(\p P_{\rho_k})\right]( \alpha^k_v)\right|  \to 0, $$

because $(\Lambda_{\rho_k})_*(\p P_{\rho_k}) =  -{\pt}_* \langle T_{0, \rho_k}, |z|=1 \rangle$ have equibounded masses by the choice of $\rho_k$, while $\|\alpha^k_v\|^* \lesssim \rho_k \|\alpha_v\|^*$ have comasses going to $0$.

\medskip

Therefore no boundary appears in the limit and $P$ is a normal cycle in $\Ac$. The fact that it is $\vt$-positive follows easily by the fact that so are the currents $P_\rho$, as remarked in the beginning of the proof of lemma \ref{lem:finitemasscycle}.

\end{proof}

\medskip

Summarizing, we define the current $P$ just constructed to be the \textbf{proper transform} of the positive-$(1,1)$ normal cycle $T \res \Sc$. $P$ is a normal and $\vt$-positive cycle in $\Ac$, where the semicalibration $\vt$ is Lipschitz (and actually smooth away from $\CP^n \times \{0\}$). Therefore the almost monotonicity formula holds true for $P$. Observe that the metric $\G$ on $\Ac$ fulfils the hypothesis $\frac{1}{5} \mathbb{I} \leq \G \leq 5 \mathbb{I}$ of proposition \ref{Prop:monotonicity}, being a perturbation of $\G_0$, which is in turn built from the Fubini-Study metric.

\section{Proof of the result}
\label{proof}

With the assumptions in proposition \ref{Prop:restated}, we have to observe the family $T_{0,r}=(\la_{r})_* T$ as $r \to 0$. These currents have equibounded masses by (\ref{eq:newmonotone}).

\medskip

Take any converging sequence $T_{0,r_n}:=(\la_{r_n})_* T \rightharpoonup T_\infty$ for $r_n \to 0$. Take the proper transform of each $T_{0,r_n}$ and denote it by $P_n$. Lemmas \ref{lem:finitemasscycle} and \ref{lem:normalposcycle} yield that $P_n$ is a $\vt_n$-positive cycle, for a semicalibration $\vt_n$ in the manifold $\Ac$.
$\vt_n$ is smooth away from $\Vc \times \{0\}$ and it is Lipschitz-continuous, with $|\vt_n-\vt_0| < c_n \text{dist}_{\G_0}(\cdot, \CP^n \times \{0\})$. Recalling lemma \ref{lem:lipcontrolI} we can see that, since the almost complex structure $J_{r_n}$ on $\Sc$ fulfils $|J_{r_n}-J_0| < (Q \, r_n) \cdot r$ in $\Sc$ (by dilation), then the constants $c_n$ go to $0$ as $n \to \infty$. Analogously we get that the Lipschitz constants of $\vt_n$ are uniformly bounded by $2L$. 

By lemma \ref{lem:finitemasscycle} the masses of $P_n$ are uniformly bounded in $n$ (with respect to $\G_0$), since so are the masses of $T_{0,r_n}$, $M(T_{0,r_n})\leq K$. 

So by compactness, up to a subsequence that we do not relabel, we can assume $P_n \rightharpoonup P_\infty$ as $n \to \infty$ for a normal cycle $P_\infty$. 

\medskip

\begin{lem}
 \label{lem:pinfty}
 $P_\infty$ is a $\vt_0$-positive cycle; more precisely it is the proper transform of $T_\infty$.
\end{lem}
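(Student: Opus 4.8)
The plan is to prove two assertions: that $P_\infty$ is $\vt_0$-positive, and that it equals the proper transform of $T_\infty$, which I denote $Q:=\lim_{\rho\to 0}{\pt}_*\!\left(T_\infty\res(\Sc\setminus\Scr)\right)$. Throughout write $E:=\CP^n\times\{0\}$ for the exceptional divisor and $\Omega:=\Ac\setminus E$. I would first establish positivity together with continuity of mass. For a nonnegative cutoff $\chi$ supported in a ball compactly contained in $\Ac$, weak convergence against the form $\chi\vt_0$ gives $P_\infty(\chi\vt_0)=\lim_n P_n(\chi\vt_0)$. Splitting $P_n(\chi\vt_0)=P_n(\chi\vt_n)+P_n(\chi(\vt_0-\vt_n))$ and using that $P_n$ is $\vt_n$-positive (so $P_n(\chi\vt_n)=\|P_n\|(\chi)$), that $\|\vt_n-\vt_0\|^*\to 0$ on the ball (lemma \ref{lem:lipcontrolI}, since $c_n\to 0$), and that $\sup_n M(P_n)<\infty$, I obtain $\lim_n\|P_n\|(\chi)=P_\infty(\chi\vt_0)$. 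Comparing this with $P_\infty(\chi\vt_0)=\int\chi\langle\vt_0,\vec P_\infty\rangle\,d\|P_\infty\|\le\|P_\infty\|(\chi)$ and with the lower semicontinuity of mass forces $\langle\vt_0,\vec P_\infty\rangle=1$ $\|P_\infty\|$-a.e., i.e.\ $\vt_0$-positivity, and as a byproduct gives the full weak-$*$ convergence $\|P_n\|\rightharpoonup\|P_\infty\|$ of the mass measures.

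Away from $E$ everything is controlled by the fixed diffeomorphism $\Phi^{-1}\colon\Sc\setminus\{0\}\to\Omega$. For $\beta$ compactly supported in $\Omega$ the pullback $(\Phi^{-1})^*\beta$ is a fixed form supported in $\Sc\setminus\{0\}$, and for $\rho$ small $P_n(\beta)=(T_{0,r_n}\res\Sc)\big((\Phi^{-1})^*\beta\big)$; letting $n\to\infty$ and using $T_{0,r_n}\rightharpoonup T_\infty$ yields $P_\infty(\beta)=(T_\infty\res\Sc)\big((\Phi^{-1})^*\beta\big)=Q(\beta)$. Hence $P_\infty=Q$ on the open set $\Omega$. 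Since $Q$ is a weighted family of fibres of the line bundle, each meeting $E$ in a single point, one has $\|Q\|(E)=0$; therefore the entire lemma reduces to proving $\|P_\infty\|(E)=0$, after which $P_\infty=P_\infty\res\Omega=Q\res\Omega=Q$.

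The crux is this last claim, and it is where $E$ must be analyzed. By $\vt_0$-positivity, on $E$ one has $1=\langle\vt_0,\vec P_\infty\rangle=\langle\vt_{\CP^n},\vec P_\infty\rangle+\langle\vt_{\C^{n+1}},\vec P_\infty\rangle$ with both summands nonnegative (each is a symplectic area of a holomorphic plane), so $\|P_\infty\|(E)$ splits into a ``vertical'' and a ``horizontal'' contribution and it suffices to annihilate each. For the vertical part I would use monotonicity: testing a cutoff $\chi_\delta$ equal to $1$ on the $\delta$-neighbourhood of $E$ and supported where $|z|<2\delta$ against $\vt_{\C^{n+1}}$ gives $P_\infty(\chi_\delta\vt_{\C^{n+1}})=\lim_n P_n(\chi_\delta\vt_{\C^{n+1}})\le\limsup_n M\big(T_{0,r_n}\res B_{2\delta}\big)\le C\delta^2$ (remark \ref{oss:mononewsett}), because $\Phi^{-1}$ is length-preserving in the vertical directions; sending $\delta\to 0$ forces $\int_E\langle\vt_{\C^{n+1}},\vec P_\infty\rangle\,d\|P_\infty\|=0$. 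For the horizontal part I would compute $P_\infty(\vt_{\CP^n})$, which by the previous paragraph equals exactly $\int_E\langle\vt_{\CP^n},\vec P_\infty\rangle\,d\|P_\infty\|$ (on $\Omega$, where $P_\infty=Q$, the vector $\vec P_\infty$ is vertical and pairs to zero with $\vt_{\CP^n}$). Using $P_\infty(\vt_{\CP^n})=\lim_n P_n(\vt_{\CP^n})$ and the integration-by-parts of the proof of lemma \ref{lem:finitemasscycle} (with ${\Phi^{-1}}^*\mathcal{E}^*\vt_{\CP^n}=d\eta$), I get $P_n(\vt_{\CP^n})=\langle T_{0,r_n},|z|=1\rangle(\eta)-\lim_{\rho\to 0}\langle T_{0,r_n},|z|=\rho\rangle(\eta)$. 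Here I would exploit that $\eta$ is invariant under radial dilations and vanishes on vectors tangent to the Hopf fibres (it is built from $\log\!\big(1+\sum|z_j|^2/|z_0|^2\big)$, constant along Hopf fibres). Since the unit slice of any positive-$(1,1)$ cone for $J_0$ has its defining vector tangent to the Hopf fibration, the slice of $T_\infty$ pairs to zero with $\eta$, and by scale-invariance of $\eta$ the term at $\rho$ equals $\langle(\lambda_\rho)_\ast T_{0,r_n},|z|=1\rangle(\eta)$, whose limit is the slice of the tangent cone of $T_{0,r_n}$ at the origin, again Hopf-tangent; letting $n\to\infty$ with convergence of slices for a.e.\ radius gives $P_\infty(\vt_{\CP^n})=0$.

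I expect the genuine difficulty to be concentrated precisely in this divisor analysis: the primitive $\eta$ blows up like $1/|z|$ as one approaches $E$, so one cannot naively integrate by parts or bound comasses, and it is only the combination of the Hopf-tangency and scale-invariance of $\eta$, the positive-$(1,1)$ cone structure of the relevant slices, and the quadratic mass decay from monotonicity that prevents mass from being created on $E$ in the limit. Once $\|P_\infty\|(E)=0$ is established, together with $\|Q\|(E)=0$ and the agreement on $\Omega$, I conclude $P_\infty=Q$, so $P_\infty$ is the proper transform of $T_\infty$, and $\vt_0$-positivity has already been shown.
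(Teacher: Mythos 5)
Your proposal is correct in substance and follows the same overall architecture as the paper's proof ($\vt_0$-positivity from $|\vt_n-\vt_0|\to 0$; identification of $P_\infty$ with the proper transform of $T_\infty$ away from the exceptional divisor via the fixed diffeomorphism $\P^{-1}$; reduction of everything to showing that no mass of $P_\infty$ sits on $\CP^n\times\{0\}$, split into a vertical and a horizontal contribution, with the vertical one killed by the quadratic mass decay from monotonicity exactly as you do). Where you genuinely diverge is the horizontal contribution. The paper does not integrate by parts at this stage: it uses that $\mathcal{E}^*\vt_{\CP^n}$ is invariant under the fibre dilations $\La_{r}$, so that $(P_n \res \Acr)(\mathcal{E}^*\vt_{\CP^n}) = (P \res \Ac_{r_n\rho})(\mathcal{E}^*\vt_{\CP^n}) \leq M(P\res\Acr)$ for the \emph{single} fixed current $P={\pt}_*T$, and then invokes the finiteness statement of lemma \ref{lem:finitemasscycle} to get a bound $o_\rho(1)$ uniform in $n$. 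You instead re-run the integration by parts of lemma \ref{lem:finitemasscycle} against the primitive $\eta$ and kill both boundary terms using the Hopf-tangency of slices of positive-$(1,1)$ cones (an idea the paper itself deploys, but in the proof of lemma \ref{lem:normalposcycle}, to rule out boundary creation) together with the scale invariance of $\eta$. Your route proves the stronger statement $\lim_{\rho\to 0}\langle T_{0,r_n},|z|=\rho\rangle(\eta)=0$, whereas the paper only needs that the two slice terms have a common limit; the price is that you must justify several steps you currently elide: testing the weak limit $P_n\rightharpoonup P_\infty$ against the non-compactly-supported form $\vt_{\CP^n}$ requires a cutoff near the divisor (otherwise mass escaping to $\p\Ac$ is an issue), the convergence of slices under weak convergence of currents is not automatic and should be replaced by the averaged identity $\int_a^b\langle T,|z|=\rho\rangle(\eta)\,d\rho=(T\res\{a<|z|<b\})(d|z|\wedge\eta)$ over generic radii, and ``the tangent cone of $T_{0,r_n}$ at the origin'' need not be unique (that is the theorem being proved) --- though since \emph{every} such subsequential limit is a positive-$(1,1)$ cone with Hopf-tangent slices, your conclusion survives. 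With these repairs your argument is a valid, somewhat longer, alternative to the paper's for this lemma.
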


\begin{proof}
$\vt_0$-positiveness follows straight from the $\vt_n$-positiveness of $P_n$ and $|\vt_n-\vt_0| < c_n \text{dist}_{\G_0}(\cdot, \CP^n \times \{0\})$, $c_n \to 0$.

Recall that $\vt_0=\mathcal{E}^*(\vt_{\CP^n} + \vt_{\C^{n+1}})$; we want to estimate (notation from section \ref{blowup})
$$M(P_\infty \res \Acr)= (P_\infty \res \Acr)(\vt_0) = \lim_{n \to \infty} (P_n \res \Acr)(\vt_0).$$ 

Write

\be
\label{eq:est1}
(P_n \res \Acr)(\vt_0)=(P_n \res \Acr)(\mathcal{E}^*\vt_{\CP^n}) + (P_n \res \Acr)(\mathcal{E}^*\vt_{\C^{n+1}}).
\ee

Let us bound the second term on the r.h.s.

$$(P_n \res \Acr)(\mathcal{E}^*\vt_{\C^{n+1}}) = (\La_\rho)_*(P_n \res \Acr) \left((\La_r^{-1})^*(\mathcal{E}^*\vt_{\C^{n+1}})\right).$$

The current $(\La_\rho)_*(P_n \res \Acr)$ is the proper transform of $T_{0, \rho r_n}$, therefore (lemma \ref{lem:finitemasscycle}) $M\left((\La_\rho)_*(P_n \res \Acr)\right) \leq K \, C$ independently of $n$; the form $(\La_r^{-1})^*(\mathcal{E}^*\vt_{\C^{n+1}})$ has comass bounded by $\rho^2$. Altogether 

$$(P_n \res \Acr)(\mathcal{E}^*\vt_{\C^{n+1}}) \leq  K \, C \, \rho^2 .$$

To bound the first term on the r.h.s. of (\ref{eq:est1}), let $P$ be the proper transform of $T$; using that $(\La_{r_n})^* \mathcal{E}^*\vt_{\CP^n} = \mathcal{E}^*\vt_{\CP^n}$ we can write

$$(P_n \res \Acr)(\mathcal{E}^*\vt_{\CP^n}) = (P \res {\Ac}_{r_n \rho}) (\mathcal{E}^*\vt_{\CP^n}) \leq M\left( P \res {\Ac}_{r_n \rho} \right) \leq M\left( P \res {\Ac}_{\rho} \right), $$

which goes to $0$ as $\rho \to 0$ by lemma \ref{lem:finitemasscycle}. Summarizing we get that there exists a function $o_\rho(1)$ that is infinitesimal as $\rho \to 0$, such that $|(P_n \res \Acr)(\vt_0)| \leq o_\rho(1)$ (the point is that $o_\rho(1)$ can be chosen independently of $n$). 

Therefore also $M(P_\infty \res \Acr) = \lim_{n \to \infty} (P_n \res \Acr)(\vt_0)\leq o_\rho(1)$, which means that  

\be
\label{eq:limitPinfty}
P_\infty = \lim_{\rho \to 0} P_\infty \res (\Ac \setminus \Acr).
\ee

\medskip

Recall now that the proper transform is a diffeomorphism away from the origin, thus

$$P_\infty \res (\Ac \setminus \Acr) = \lim_n {\pt}_* T_{0,r_n} \res (\Sc \setminus \Scr) = {\pt}_* T_\infty \res (\Sc \setminus \Scr),$$

which concludes, together with (\ref{eq:limitPinfty}), the proof that $P_\infty$ is the proper transform of ${\pt}_* T_\infty$.

\end{proof}

Recalling (\ref{eq:11link}), the previous lemma tells us that $P_\infty$ is of a very special form. Denoting $\Vc:=\left\{\sum_{j=1}^n \frac{|z_j|^2}{|z_0|^2} <1\right\} \subset \CP^n$ and, for each disk $D^X$ in $\Sc$, $L^X$ the disk such that $\P(L^X) = D^X$, we have

\be
\label{eq:11linktransf}
P_{\infty}(\beta) = \int_{\Vc} \left\{ \int_{L^X} \langle \beta, \vec{L^X} \rangle \; d \mathcal{L} ^{2} \right\} d \tau|_{\Vc}(X) .
\ee

When we take the proper transform the density is preserved going from $\Sc$ to $\P^{-1}(\Sc)$, since $\P^{-1}$ is a diffeomorphism on $\Sc$ (see lemma \ref{lem:A2}).

We are ready to conlcude the proof of proposition \ref{Prop:restated}, and therefore of theorems \ref{thm:main1} and \ref{thm:main2}. 

\begin{proof}[\textbf{proof of proposition \ref{Prop:restated}}]
The points $\frac{x_m}{|x_m|}$ converge to the point $(1, 0, ..., 0)$ in $D \cap S^{2n+1}$, where $D$ is the disk $D=D^{[1,0,...0]}$. 

We want to show that any converging sequence $T_{0,r_n}:=(\la_{r_n})_* T \rightharpoonup T_\infty$ is such that the cone $T_\infty$ contains $\ka \llbracket D \rrbracket$.

\medskip

Let us apply the proper transform to $T_{0,r_n}$ and get $P_n$ as in lemma \ref{lem:pinfty}. Fix $n$: there is a sequence $\{x_m\}$ tending to the origin of points with densities such that $\liminf_{m \to \infty} \nu(x_m) \geq \ka$. By lemma \ref{lem:A2} the points $p_m := {\pt}(x_m)$ also have densities fulfilling that their $\liminf$ is $\geq \ka$ for $P_n$. 

It easily seen that it holds $p_m \to p_0=([1,0,...0],0) \in \CP^n \times \C^{n+1}$.

By upper semi-continuity of the density (which follows from the almost monotonicity formula for $P_n$) we get that $p_0$ also has density $\geq \ka$ for $P_n$. 

Doing this for every $n$ we get that we are dealing with a sequence of normal cycles $P_n$ all having the point $p_0$ as a point of density $\geq \ka$. We wish to prove that, being the cycles $P_n$ positive, then the point $p_0$ is also of density $\geq \ka$ for the limit $P_\infty$.

The cycles $P_n$ are $\vt_n$-positive so for any $\delta>0$ it holds

$$M(P_n \res B_\delta(p_0)) =  (P_n \res B_\delta(p_0))(\vt_n).$$

By weak convergence

$$M(P_\infty \res B_\delta(p_0)) =  (P_\infty \res B_\delta(p_0))(\vt_0) = $$ $$= \lim_{n \to \infty}(P_n \res B_\delta(p_0))(\vt_0). $$ 

We can split 

\be
\label{eq:splitvalue}
(P_n \res B_\delta(p_0))(\vt_0)=(P_n \res B_\delta(p_0))(\vt_0-\vt_n) + (P_n \res B_\delta(p_0))(\vt_n).
\ee

The semi-calibrations $\vt_n$ have uniform bounds on their Lipschitz constants, say $2L$. The metrics at $p_0$ coincide with $\G_0$ independently of $n$. We can therefore use the almost monotonicity formula for $P_n$ at $p_0$ (proposition \ref{Prop:monotonicity}) to get

$$(P_n \res B_\delta(p_0))(\vt_n)=M(P_n \res B_\delta(p_0)) \geq \pi (\ka - C 2L \delta) \delta^2 , $$

where $C$ is a universal constant. The forms $\vt_n$ fulfil $|\vt_n - \vt_0| < c_n$ in $B_\delta(p_0)$ and $c_n \to 0$ as $n \to \infty$.  Therefore we can bound, from (\ref{eq:splitvalue}),

$$|(P_n \res B_\delta(p_0))(\vt_0)| \geq -c_n K \, C + M(P_n \res B_\delta(p_0)) \geq -c_n K \, C+\pi \ka \delta^2- 2C L \delta^3 .$$

Since $c_n \to 0$ we can conclude 

\be
\label{eq:densitypassestolimit}
M(P_\infty \res B_\delta(p_0)) \geq \pi \ka \delta^2-2C L \delta^3
\ee

independently of $\delta$, which means that $p_0$ is a point of density $\geq \ka$ for the $\vt_0$-positive cycle $P_\infty$.

Recall the structure of $P_\infty$: it is made by the holomorphic disks $L^X$ weighted with the positive measure $\tau$, so if $y_0$ has density $\geq \ka$, then the disk $L^{[1,0, ...0]}$ must be weighted with a mass $\geq \ka$, in other words the measure $\tau$ must have an atom of mass $\geq \ka$ at $y_0$. 

So $P_\infty$ is of the form $\ka \llbracket L^{[1,0, ...0]} \rrbracket + \tilde{P}$, for a $\vt_0$-positive current $\tilde{P}$. Transforming back via $\Phi$, $T_\infty$ contains the disk $\ka \llbracket D \rrbracket$, as required.
 
\end{proof}

\newpage

\appendix
\section{Appendix}

The following almost-monotonicity formula for positive or semi-calibrated cycles is proved in \cite{PR}, Proposition 1, for a $C^1$ semi-calibration: the same proof works as well for a form with Lipschitz-continous coefficients, so we only give the statement.

\medskip

Let the ball of radius $2$ in $\R^d$ be endowed with a metric $g$ and a two-form $\om$ such that both $g$ and $\om$ have Lipschitz-continuous coefficients (with respect to the standard coordinates on $\R^n$) and $\om$ has unit comass for $g$. The metric $g$ is represented by a matrix and we further assume that $\frac{1}{5} \mathbb{I}\leq g \leq 5 \mathbb{I}$, where $\mathbb{I}$ is the identity matrix. So $g$ is a Lipshitz perturbation of the flat metric.

Let $T$ be a $\om$-positive normal cycle. Then we have a $2$-vector field $\vec{T}(x)$, of unit mass with respect to $g$. This means that for $\|T\|$-a.a. $x$, $\vec{T}(x)=\sum_{k=1}^{N(x)} \la_k(x) \vec{T}_k(x)$, a convex combination of $\om_x$-calibrated unit simple $2$-vectors. The mass refers pointwise to the metric $g_{x}$.

\begin{Prop}
\label{Prop:monotonicityapp}
In the previous hypothesis, there exists $r_0 >0$ and $C>0$, depending only on the Lipschitz constants of $g$ and $\om$ such that, given an arbitrary point $x_0 \in B_1(0)$, the following holds. 

Denote by $B_r(x_0)$ (respectively $B_s(x_0)$) the ball around $x_0$ of radius $r$ (respectively $s$) with respect to the metric $g_{x_0}$; let $|\cdot|$ be the distance for $g_{x_0}$ and $|\cdot|_g$ the mass-norm with respect to $g$. Let $\frac{\p}{\p r}$ be the unit radial vector field with respect to $x_0$ and $g_{x_0}$.

For any $0<s<r<r_0$, we have

\be
\label{eq:monotonicity}
\begin{split}
\frac{e^{Cr} + Cr}{r^{2}}  \left( T \res B_r(x_0) \right) (\om) - \frac{e^{Cs} + Cs}{s^{2}}  \left( T \res B_s(x_0) \right)(\om) \\
\geq  \int_{B_r \setminus B_s (x_0)} \frac{1}{|x-x_0|^{2}} \sum_{k=1}^{N(x)} \la_k(x)  \left| \vec{T}_k(x) \wedge \frac{\p}{\p r} \right|_{g(x)}^2  d \|T\|
\end{split}
\ee

and 

\be
\label{eq:monotonicity2}
\begin{split}
\frac{e^{Cr} - Cr}{r^{2}}  \left( T \res B_r(x_0) \right) (\om) - \frac{e^{Cs} - Cs}{s^{2}}  \left( T \res B_s(x_0) \right)(\om) \\
\leq  \int_{B_r \setminus B_s (x_0)} \frac{1}{|x-x_0|^{2}} \sum_{k=1}^{N(x)} \la_k(x)  \left| \vec{T}_k(x) \wedge \frac{\p}{\p r} \right|_{g(x)}^2  d \|T\| .
\end{split}
\ee

\end{Prop}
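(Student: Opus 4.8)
The plan is to obtain both finite-difference inequalities by integrating a single differential inequality for the mass ratio, in the spirit of the classical monotonicity formula for minimal surfaces, adapted to the semi-calibrated setting. First I would use positivity to turn the mass into an action: since $T$ is $\om$-positive, $m(r):=M(T\res B_r(x_0))=(T\res B_r(x_0))(\om)$ by \eqref{eq:M=action}. The decisive simplification is that the balls $B_r(x_0)$ and the radial field $\frac{\p}{\p r}$ are defined through the \emph{frozen} metric $g_{x_0}$: in $g_{x_0}$-normal coordinates the distance $u(x):=|x-x_0|$ and $\frac{\p}{\p r}=\nabla u$ are exactly Euclidean, so the geometry of spheres and of the radial foliation carries no error. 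All deviations from the flat, closed model are thereby confined to two sources, each controlled by the Lipschitz hypotheses: the mass element, computed with the varying $g$ for which $|g(x)-g(x_0)|\leq L|x-x_0|$, and the form, for which $|\om(x)-\om(x_0)|\leq L|x-x_0|$ and $|d\om|\leq L$.

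Next I would set up the slicing. For a.e.\ $r$ the slice $\langle T,u,r\rangle=\p(T\res B_r(x_0))$ exists and has finite mass (here $\p T=0$ is used), and $m$ is differentiable with $m'(r)$ expressed through the slice by the coarea formula. The algebraic core is the pointwise identity, valid for each calibrated unit $2$-vector $\vec{T}_k$ and $w=\frac{\p}{\p r}$,
\[
\left|\vec{T}_k \res w\right|^2 + \left|\vec{T}_k \wedge w\right|^2 = 1,
\]
the two summands being the squared lengths of the tangential and normal projections of $w$ onto the plane $\vec{T}_k$. Inserting this into the coarea expression for $m'(r)$ and comparing $r\,m'(r)$ with $2\,m(r)$ reproduces, in the model flat--closed case, the exact monotonicity identity in which $\frac{d}{dr}\big(m(r)/r^2\big)$ equals the spherical density of the excess $|\vec{T}\wedge\frac{\p}{\p r}|^2/|x-x_0|^2$; its radial integral is precisely the right-hand side of \eqref{eq:monotonicity} and \eqref{eq:monotonicity2} with $C=0$, where the two inequalities then coincide in an equality.

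Finally I would reinstate the errors. Re-expressing $(T\res B_r(x_0))(\om)$ and applying Stokes with $\p(T\res B_r(x_0))=\langle T,u,r\rangle$ converts the failure of $\om$ to be closed into a term bounded by $\int_{B_r}|d\om|\,d\|T\|\leq L\,m(r)$, while replacing $g$ and $\om$ by their frozen values at $x_0$ costs a further factor $1+O(L\,r)$ on the mass element and on the calibration pairing. These combine into a differential inequality of the form $\frac{d}{dr}\big(m(r)/r^2\big)\geq(\text{excess integrand})-C\,m(r)/r^2$, together with the reversed inequality carrying the opposite sign, to which a Gronwall-type integrating factor $e^{\pm Cr}$ applies; carrying the additive $O(L)$ contributions separately produces the precise weights $e^{Cr}\pm Cr$, and integration from $s$ to $r$ gives the two stated inequalities. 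The main obstacle is exactly this two-sided bookkeeping: one must track the sign of every error coming from the Lipschitz metric and from $d\om$ so that the \emph{same} excess term survives on the right of both \eqref{eq:monotonicity} and \eqref{eq:monotonicity2}, and so that the perturbation away from the frozen radial derivative does not spoil the nonnegativity of $|\vec{T}_k\wedge\frac{\p}{\p r}|^2$; justifying the coarea differentiation for a.e.\ $r$ and the existence of slices of finite mass is a routine but necessary preliminary.
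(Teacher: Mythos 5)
The paper offers no proof of this proposition: it defers entirely to \cite{PR}, Proposition 1, remarking only that the argument there extends verbatim from $C^1$ to Lipschitz coefficients. Your sketch correctly reconstructs that standard argument --- mass equals the action of $\om$ by positivity, coarea/slicing for $m'(r)$, the tangential--normal splitting $|\vec{T}_k\res \frac{\p}{\p r}|^2+|\vec{T}_k\wedge \frac{\p}{\p r}|^2=1$ producing the excess term, and a two-sided Gronwall absorbing the $O(Lr)$ errors coming from freezing $g$ and $\om$ at $x_0$ and from $d\om\neq 0$ --- so it is essentially the same approach as the cited proof.
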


\newpage

The following two lemmas are used in the paper when pushing forward a positive cycle under a diffeomorphism.

\medskip

\begin{lem} \label{lem:A1} [\textbf{the pushforward of a positive-$(1,1)$ current via a pseudoholomorphic diffeomorphism is positive-$(1,1)$}]

Let $C$ be a positive-$(1,1)$ normal current in an open set $U \subset \R^{2N}$, endowed with an almost complex structure $J_1$, a compatible metric $g_1$ and a two-form $\om_1$. Let $f:U \rightarrow \R^{2N}$ be a smooth pseudoholomorphic diffeomorphism, where $\R^{2N}$ is endowed with an almost complex structure $J_2$ and compatible metric and semi-calibration $g_2$ and $\om_2$. Then $f_* C$ is a positive-$(1,1)$ normal current in $(\R^{2N}, J_2, g_2)$.
\end{lem}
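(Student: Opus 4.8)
\textbf{Proof strategy for lemma \ref{lem:A1}.} The plan is to exploit the equivalent description of positive-$(1,1)$-ness recalled in the introduction: a normal cycle is positive-$(1,1)$ for an almost complex structure precisely when, for $\|\cdot\|$-a.e.\ point, its tangent $2$-vector lies in the convex hull of the positively oriented unit simple $2$-vectors spanning $J$-invariant (holomorphic) planes. I would therefore track how this pointwise algebraic condition behaves under $f_*$, reducing the whole statement to linear algebra in the fibres of $df$.

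First I would settle normality, the easy part. Since $f$ is a smooth diffeomorphism, the functorial identity $\p(f_* C) = f_*(\p C)$ holds, and the comass bounds on $f^*$ give $M(f_* C) \leq (\sup \|\Lambda_2 df\|)\, M(C)$ together with $M(\p(f_* C)) = M(f_*(\p C)) \leq (\sup \|df\|)\, M(\p C)$; both are finite on the bounded region under consideration, so $f_* C$ is a normal current in $(\R^{2N}, J_2, g_2)$. Next I would record how the tangent field transforms. Writing $C$ in its integral representation $C(\beta) = \int \langle \beta, \vec{C}\rangle\, d\|C\|$, the definition $f_* C(\beta) = C(f^*\beta)$ yields $f_* C(\beta) = \int \langle \beta_{f(x)}, \Lambda_2 df_x(\vec{C}_x)\rangle\, d\|C\|(x)$. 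Hence $f_* C$ is again representable by integration, with mass measure the pushforward of $|\Lambda_2 df_x(\vec{C}_x)|\, \|C\|$ and with tangent $2$-vector at $f(x)$ equal to the normalization of $\Lambda_2 df_x(\vec{C}_x)$.

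I would then invoke the a.e.\ decomposition $\vec{C}_x = \sum_k \la_k \vec{C}_k$ into positively oriented $J_1$-holomorphic unit simple $2$-vectors; by linearity $\Lambda_2 df_x(\vec{C}_x) = \sum_k \la_k\, \Lambda_2 df_x(\vec{C}_k)$, each summand being simple as the image of a simple $2$-vector under $\Lambda_2$ of an invertible map. The crux, and the step I expect to be the only genuine obstacle, is the orientation bookkeeping showing that each $\Lambda_2 df_x(\vec{C}_k)$ is a \emph{positive} multiple of a $J_2$-holomorphic unit $2$-vector carrying its complex orientation. Writing $\vec{C}_k$ proportional to $e \wedge J_1 e$, pseudoholomorphicity $df_x \circ J_1 = J_2 \circ df_x$ gives $\Lambda_2 df_x(e \wedge J_1 e) = df_x(e) \wedge df_x(J_1 e) = df_x(e) \wedge J_2\, df_x(e)$, which is exactly the complex orientation of the $J_2$-invariant plane $df_x(\mathrm{span}\{e, J_1 e\})$, with strictly positive coefficient because $df_x$ is invertible.

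Consequently $\Lambda_2 df_x(\vec{C}_x)$ is a nonnegative (and nonzero) combination of $\om_2$-calibrated unit simple $2$-vectors, and after dividing by its mass norm the tangent of $f_* C$ lands in the convex hull of $\Gc(\om_2)$ for $\|f_* C\|$-a.e.\ point. By the equivalence $\langle \om_2, \cdot\rangle = 1 \Leftrightarrow J_2$-invariance recorded in (\ref{eq:JandOm}), this is precisely the assertion that $f_* C$ is positive-$(1,1)$, i.e.\ $\om_2$-positive, which completes the plan. I would keep in mind that the measure-theoretic step hides one subtlety worth checking carefully, namely that the positivity, established $\|C\|$-a.e.\ on the domain, genuinely transfers to a $\|f_* C\|$-a.e.\ statement on the target; this is where the clean behaviour of pushforward under a diffeomorphism is used, since the pushforward of the mass measure is then transparent.
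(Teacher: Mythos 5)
Your proposal is correct and follows essentially the same route as the paper: decompose $\vec{C}$ into a convex combination of $J_1$-holomorphic unit simple $2$-vectors, push each forward, use pseudoholomorphicity ($df\circ J_1 = J_2\circ df$) to see the images are $J_2$-holomorphic, and renormalize the resulting measure and $2$-vector field. You spell out the orientation computation and the normality check a bit more explicitly than the paper does, but the argument is the same.
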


\begin{proof}[\bf{proof of lemma \ref{lem:A1}}]
The current $C$ is represented by a couple $(\mu_C, \vec{C})$, where $\mu_C$ is a Radon measure and $\vec{C}$ is a unit $2$-vector field, well defined $\mu_C$-a.e. The $(1,1)$-condition can be expressed by the fact that $\vec{C} = \sum_{j=1}^M \la_j \vec{C}_j$, with $\sum_{j=1}^M \la_j=1$, $\la_j \geq 0$ and $\vec{C}_j$ are unit simple $J_1$-invariant.

The push-forward $f_*C$ can be represented by the Radon measure $f_*\mu_C$ and the $2$-vector field (defined $f_*\mu_C$-a.e.) $f_*\vec{C}$, the latter is however not of unit mass. Denoting by $\|\cdot\|$ the mass norm on $2$-vectors with respect to $g_2$, we rewrite it as 

$$f_*\vec{C} = \sum_{j=1}^M \la_j f_* \vec{C}_j = \sum_{j=1}^M \la_j \cdot \|f_* \vec{C}_j\| \frac{f_* \vec{C}_j}{\|f_* \vec{C}_j\|} = $$

$$=\left(\sum_{j=1}^M \la_j \cdot \|f_* \vec{C}_j\|\right) \sum_{j=1}^M \frac{\la_j \cdot \|f_* \vec{C}_j\|} {\left(\sum_{j=1}^M \la_j \cdot \|f_* \vec{C}_j\|\right)}\frac{f_* \vec{C}_j}{\|f_* \vec{C}_j\|},$$

where each simple $2$-vector $\frac{f_* \vec{C}_j}{\|f_* \vec{C}_j\|}$ is of unit mass and $J_2$-invariant (by the hypothesis on $f$).

We can then represent $f_*C$ by the Radon measure $$\left(\sum_{j=1}^M \la_j \cdot \|f_* \vec{C}_j\|\right) f_* \mu_C$$ and the $2$-vector field of unit mass $$\sum_{j=1}^M \frac{\la_j \cdot \|f_* \vec{C}_j\|} {\left(\sum_{j=1}^M \la_j \cdot \|f_* \vec{C}_j\|\right)}\frac{f_* \vec{C}_j}{\|f_* \vec{C}_j\|},$$ which is a convex combination of unit simple $J_2$-holomorphic $2$-vectors. 

\end{proof}

\medskip

\begin{lem}
\label{lem:A2} [\textbf{the density is preserved}]

Let $U$, $V$ be open sets in $\R^{2n+2}$, $\om$ be a calibration in $U$, $T$ be a normal $\om$-positive $2$-cycle in $U$, $f:U \to V$ be a diffeomorphism. 
Be $\nu(p) \geq 0$ the density of $T$ at $p \in U$. Then the current $f_*T$ has $2$-density equal to $\nu(p)$ at the point $f(p) \in V$.
\end{lem}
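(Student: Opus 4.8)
The plan is to reduce the statement to the first-order behaviour of $f$ at $p$ together with the structure of tangent cones. Write $q := f(p)$ and $A := df_p$, the differential of $f$ at $p$, a linear isomorphism of the tangent spaces. By lemma \ref{lem:A1} the pushforward $f_*T$ is again a positive-$(1,1)$ normal cycle (for the target structure), so the almost-monotonicity of proposition \ref{Prop:monotonicity} applies to $f_*T$ at $q$ and the density $\nu_{f_*T}(q):=\lim_{s\to 0} M(f_*T \res B_s(q))/(\pi s^2)$ is well defined and equals the mass in the unit ball of \emph{any} tangent cone to $f_*T$ at $q$. Hence it suffices to exhibit one such tangent cone and to compute its vertex density.

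First I would show that blow-up commutes with pushforward, up to replacing $f$ by its differential. Fix a sequence $s_k \downarrow 0$ along which $(\la_{p,s_k})_* T \rightharpoonup C$, a tangent cone to $T$ at $p$; by monotonicity the masses $M((\la_{p,s_k})_*T \res B_R)$ are equibounded on each $B_R$. Setting $g_k := \la_{q,s_k}\circ f \circ \la_{p,s_k}^{-1}$, functoriality of pushforward gives $(\la_{q,s_k})_* f_* T = (g_k)_* (\la_{p,s_k})_* T$, while $g_k(y)=(f(p+s_k y)-f(p))/s_k \to A(y)$ in $C^1_{loc}$ with uniformly bounded first derivatives, since $f$ is smooth. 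Continuity of the pushforward operation under $C^1_{loc}$-convergence of the maps, together with the uniform mass bounds, then yields $(\la_{q,s_k})_* f_* T \rightharpoonup A_* C$. Thus $A_*C$ is a tangent cone to $f_*T$ at $q$, and $\nu_{f_*T}(q)$ is the vertex density of $A_*C$.

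It remains to compute that density. Since $T$ is positive-$(1,1)$, the cone $C$ has the structure \eqref{eq:11link}: there is a positive Radon measure $\tau$ on $\CP^n$ with $\tau(\CP^n)=\nu(p)$ such that $C = \int \llbracket D^X\rrbracket\, d\tau(X)$, a superposition of unit disks through the origin. As $A$ is linear it commutes with dilations, so $A_*C = \int \llbracket A(D^X)\rrbracket\, d\tau(X)$ is again a cone, now a superposition of the flat $2$-planes $A(D^X)$ through the origin, each with multiplicity one. The crucial elementary fact is that a flat $2$-disk of radius one has area exactly $\pi$, \emph{independently} of the plane containing it; therefore $M(A_*C \res B_1) = \int M(\llbracket A(D^X)\rrbracket \res B_1)\, d\tau(X) = \int \pi\, d\tau(X) = \pi\,\tau(\CP^n) = \pi\,\nu(p)$, so the vertex density of $A_*C$ is $\nu(p)$. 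This gives $\nu_{f_*T}(q)=\nu(p)$. Note that no use is made of the pseudoholomorphicity of $f$ here: the argument works for an arbitrary diffeomorphism, as it must, since it reduces to the invariance of the multiplicity-one sheets.

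The main obstacle is the rigorous justification of the interchange of blow-up and pushforward in the second step, i.e. that the nonlinear corrections to $f$ do not contribute in the limit and that no mass is created or lost. This rests on two inputs already available: the uniform mass-ratio bounds from almost-monotonicity (proposition \ref{Prop:monotonicity}), which keep the rescaled currents in a compact family, and the continuity — not merely lower semicontinuity — of the mass under weak convergence of positive cycles, which is what allows the density to pass to the limit. A minor point to check is that $A_*C$ carries no cancellation of mass, which is automatic because all the planes $A(D^X)$ carry positive multiplicities (equivalently, because $A_*C$ stays positive for the appropriate calibration, by lemma \ref{lem:A1}).
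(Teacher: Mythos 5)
Your proof is correct and follows essentially the same route as the paper's: first reduce to the linear map $A=Df(p)$ by showing the nonlinear correction is negligible in the blow-up limit (using the mass bound $M(T\res B_{cR})\leq KR^2$ from almost-monotonicity), then use the representation of the tangent cone as a positive measure of total mass $\nu(p)$ on multiplicity-one $2$-planes, which a linear isomorphism preserves. The only cosmetic point is that the vertex density of $A_*C$ should be computed on small balls $B_s$ rather than on $B_1$ (for $s$ small enough, $B_s$ meets each sheet $A(D^X)$ in a full flat disk of area $\pi s^2$, whereas $A(D^X)\cap B_1$ need not be a full unit disk if $A$ contracts); the conclusion is unaffected.
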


\begin{proof}[\bf{proof of lemma \ref{lem:A2}}]
Up to translations, which do not affect densities, we may assume $p=f(p)=0$, the origin of $\R^{2n+2}$. We use coordinates $q=(q_1, q_2, ..., q_{2n+2})$.

\medskip
 
\textit{Step 1}. Assume that $f$ is linear. Choose any sequence of radii $R_n \downarrow 0$ and dilate the current $f_*T$ around $0$ with the chosen factors, i.e. observe the sequence:
\[\left(\frac{Id}{|R_n|}\right)_* \left(f_*T \right) = \left(\frac{Id}{|R_n|} \circ f \right)_*  T  .\]

By the linearity of $f$ this is the same as 
\[ \left(f \circ  \frac{Id}{|R_n|} \right)_*  T  =  f_* \left(\frac{Id}{|R_n|}\right)_* T .\]

The assumptions yield a subsequence $R_{n_j}$ such that $\left(\frac{Id}{|R_{n_j}|}\right)_* T \rightharpoonup T_\infty$ for a cone $T_\infty$, whose density at the vertex is $\nu(0)$. So

\[f_* \left(\frac{Id}{|R_{n_j}|}\right)_* T \rightharpoonup f_*T_\infty .\]

Recall that $T_\infty$ is represented by a positive Radon measure on the $2$-planes, with total mass $\nu(0)$. The linearity of $f$ gives that $f_*T_\infty$ is still a cone with the same density $\nu(0)$ at the vertex, so we have found a subsequence $R_{n_j}$ such that $\left(\frac{Id}{|R_{n_j}|}\right)_* \left(f_*T \right)$ weakly converges to a cone with density $\nu(0)$. Since the sequence $R_n$ was arbitrary, we get in particular that $f_*T$ has $2$-density equal to $\nu(0)$ at the point $f(0)=0$.

\medskip
 
\textit{Step 2}. For a general $f$, write $f(q) = Df(0) \cdot q + o(|q|)$.

As before, we have to observe $\displaystyle \left(\frac{Id}{|R_n|}\right)_* \left(f_*T \right)$. We show that this sequence has the same limiting behaviour as  $\displaystyle \left(\frac{Id}{|R_n|}\right)_* \left((Df(0) \cdot q)_*T \right)$, for which \textit{Step 1} applies.

We estimate the difference of the actions on a two-form $\beta$ supported in the unit ball $B_1$: 

\[\left(\frac{Id}{|R_n|}\right)_*\left[f_*T - (Df(0) \cdot q)_* T\right](\beta) = \]\[=T \left( f^* \left(\frac{Id}{|R_n|}\right)^* \beta - (Df(0) \cdot q)^* \left(\frac{Id}{|R_n|}\right)^* \beta\right).\]  

Writing explicitly $\beta= \sum_I \beta_I dq^I$, where $dq^I = dq^i \wedge dq^j$ for $i \neq j \in \{1, 2, ..., 2n+2\}$, the difference in brackets reads\footnote{Writing $f=(f^1, f^2, ..., f^{2n+2})$ and $I=(i,j)$, the notation $df^I$ stands for $d (f^i) \wedge d (f^j)$, as in \cite{G} (page 120).}

\[\sum_I  \frac{\beta_I \circ \frac{Id}{|R_n|} \circ f - \beta_I \circ \frac{Id}{|R_n|} \circ (Df(0) \cdot q)}{R_n^2} df^I .  \]

 This form is supported, for $n$ large enough, in a ball of radius $\leq \frac{1}{2|Df(0)|}R_n$ around $0$. Moreover, for each $I$, we can estimate from above, for $n$ large enough:

$$|df^I|\left|\frac{\beta_I \circ \frac{Id}{|R_n|} \circ f - \beta_I \circ \frac{Id}{|R_n|} \circ (Df(0) \cdot q)}{R_n^2}\right| \leq $$ $$\leq \frac{\|f\|_{C^1(B_1)}\|\beta_i\|_{C^1(B_1)}}{R_n^3}  \cdot |o(|q|)| \leq \frac{|o(1)|}{R_n^2} , $$

for a function $o(1)$, infinitesimal as $n \to \infty$, depending on $\beta$ and $\|f\|_{C^2}$. Using monotonicity, we get a constant $K>0$, depending on $\nu(0)$ and $\|f\|_{C^1}$, such that $M \left(T \res B_{\frac{1}{2|Df(0)|}R_n} \right)\leq K R_n^2$ for $n$ large enough. These estimates imply

\[T \left( f^* \left(\frac{Id}{|R_n|}\right)^* \beta - (Df(0) \cdot q)^* \left(\frac{Id}{|R_n|}\right)^* \beta\right) \to 0  \text{ as $n \to \infty$, }\]  

so the limiting behaviour of $\displaystyle \left(\frac{Id}{|R_n|}\right)_* \left(f_*T \right)$ must be the same as that of $\displaystyle \left(\frac{Id}{|R_n|}\right)_* \left((Df(0) \cdot q)_*T \right)$. In particular the density of $f_*T$ at the point $f(0)=0$ is  $\nu(0)$.

\end{proof}

\newpage

\end{document}